\newtheorem{theorem}{Theorem}[section]
\newtheorem{lemma}[theorem]{Lemma}
\newtheorem{corollary}[theorem]{Corollary}
\theoremstyle{definition} 
\newtheorem{remark}[theorem]{Remark}
\newtheorem{definition}[theorem]{Definition}
\newtheorem{proposition}[theorem]{Proposition}
\newcommand{\Vol}{\mathrm{Vol}(M)}
\newcommand{\Diff}{\mathrm{Diff}(M)}
\newcommand{\DiffRn}{\mathrm{Diff}(\mathbb{R}^n)}
\newcommand{\DiffR}{\Diff\times \mathbb{R}_+}
\newcommand{\SDiff}{\mathrm{Diff}_\mu(M)}
\newcommand*{\G}[1][]{\mathcal{G}_{#1}} 
    \newcommand*{\bG}[1][]{\bar{\mathcal{G}}_{#1}}
\newcommand*{\Gfin}[1][]{\mathcal{G}^\text{fin}_{#1}} 
    \newcommand*{\bGfin}[1][]{\bar{\mathcal{G}}^\text{fin}_{#1}}
\newcommand*{\Gaff}[1][]{\mathcal{G}^\text{aff}_{#1}} 
    \newcommand*{\bGaff}[1][]{\bar{\mathcal{G}}^\text{aff}_{#1}}
\newcommand*{\Glarge}[1][]{\mathcal{G}^{\rm big}_{#1}} 
    \newcommand*{\bGlarge}[1][]{\bar{\mathcal{G}}^{\rm big}_{#1}}
\newcommand*{\Gdiv}[1][]{\mathcal{G}^\text{div}_{#1}} 
    \newcommand*{\bGdiv}[1][]{\bar{\mathcal{G}}^\text{div}_{#1}}
\DeclareMathOperator{\GL}{GL}
\DeclareMathOperator{\Dens}{Dens}
\DeclareMathOperator{\Sym}{Sym}
\DeclareMathOperator{\tr}{tr}
\DeclareMathOperator{\Div}{div}
\DeclareMathOperator{\cone}{cone}
\title{Simple unbalanced optimal transport}
\author{Boris Khesin, Klas Modin, and Luke Volk}
\address{B.K. and L.V.: Department of Mathematics, University of Toronto, ON M5S 2E4, Canada}
\email{khesin@math.toronto.edu {\rm and } luke.volk@mail.utoronto.ca}
\address{K.M.: Department of Mathematical Sciences, Chalmers University of Technology and University of Gothenburg, SE-412 96 Gothenburg, Sweden} \email{klas.modin@chalmers.se}
\date{}                                           
\begin{document}
\begin{abstract}
We introduce and study a simple model capturing the main features of unbalanced optimal transport. It is based on equipping the conical extension of the group of all diffeomorphisms with a natural metric, which allows a Riemannian submersion to the space of volume forms of arbitrary total mass. We describe its finite-dimensional version and present a concise comparison study of the geometry, Hamiltonian features, and geodesics for this and other extensions. One of the corollaries of this approach is that along any geodesic the total mass evolves with constant acceleration, as an object's height in a constant buoyancy field.
\end{abstract}

\maketitle

\section{Introduction}

Many problems of optimal transport are closely related to 
the differential geometry of diffeomorphism groups. In particular, the problem 
of moving one mass (or density) to another by a diffeomorphism while minimizing a certain (quadratic) cost can be understood as construction of geodesics in an appropriate metric on the space of normalized densities (or on its completion), see e.g.\ \cite{otto2001geometry, villani2003}. 
Similar problems arise in applications when one attempts to evaluate the proximity between different shapes or medical images \cite{Tr1998}. 
However, the action by a diffeomorphism does not allow a change of the total mass of the density. 
Hence, one arrives at the problem of constructing a natural extension of the action which would allow one to connect in the most economical way densities of different total masses. Such problems, first considered by Benamou~\cite{Be2003}, belong to the domain of \emph{unbalanced optimal transport} (UOT), and they have received considerable attention lately, see \cite{chizat2018unbalanced, gallouet2021regularity, kondratyev2016new, LiMiSa2018,piccoli2016properties, vialard2017diffeomorphisms} for geometry and analysis and \cite{BaHaKl2022,ChPeScVi2018,SeViPe2022} for numerical aspects.

Usually, the setting of unbalanced optimal transport involves a ``large'' extension $G=\Diff\ltimes C^\infty_+(M)$  of the group $\Diff$ of all diffeomorphisms of a manifold  by means of a semi-direct product with the space of smooth positive functions. Such a large semi-direct product group acts on densities by a change of coordinates and then by adjusting point-wise the obtained density by means of a function. 

In this paper, we instead introduce and study a much simpler ``small'' extension $\cone(\Diff)=\DiffR$ of the same group $\Diff$. 
This way, both the group of diffeomorphisms and the space of normalized densities have similar conical extensions $\cone(\Diff)$ and $\Vol=\cone(\Dens(M))$ by one extra parameter, the total mass $m$ of the density. We describe natural metrics and geodesics for those extensions. 

It turns out that the corresponding problem of unbalanced optimal transport, while being much easier to handle, captures most of the main features for the large extensions.  For instance, for both small and large extensions, a common phenomenon is that in many two-point problems a geodesic joining two end-densities goes through densities whose total mass dips below the smallest of the two it connects.
In particular, one of the corollaries of this approach is that along any geodesic the total mass $m$ evolves with constant acceleration, $\ddot m=const$, i.e., as an object's height  in a constant buoyancy field.

We also introduce special variables in which we demonstrate   the convexity of the dynamical formulation for the simple conical extension, generalizing 
 the convexity of standard optimal transport.    This convex minimization formulation is known to be central for the existence and uniqueness of the corresponding solutions in such variational problems.

One immediate additional advantage of the present approach is that it admits a finite-dimensional model,
where the diffeomorphism group $\Diff$ for $M=\mathbb R^n$ is replaced by its subgroup ${\rm GL}(n)$, while the space of all volume forms is constrained to its subspace of non-normalized Gaussian densities on $\mathbb R^n$.

Finally, we compare in more detail our ``small" extension with two other ``larger" extensions: the one considered 
in \cite{chizat2018unbalanced, gallouet2021regularity} and called Wasserstein-Fisher-Rao and the one which is indeed a weighted  sum of the Wasserstein and Fisher-Rao metrics. 
In a sense, those two models can be viewed as 
extensions of our simpler model in, respectively, Lagrangian and Hamiltonian settings, as we discuss below. 
We describe the corresponding geodesics and candidates for their finite-dimensional counterparts. It turns out that the corresponding larger finite-dimensional models are less natural than for the small extension, as they require additional restrictions on orbits of the corresponding action.

\smallskip

\textbf{Acknowledgements.} 
We would like to thank F.-X.~Vialard and T.~Gallou{\"e}t for thought-stimulating discussions.
The research of B.K.\ was partially supported by an NSERC Discovery Grant.
K.M.\ was supported by the Swedish Research Council (grant number 2022-03453) and the Knut and Alice Wallenberg Foundation (grant number WAF2019.0201).


\section{A conical extension of the diffeomorphism group}

Let $M$ be an $n$-dimensional Riemannian manifold with volume form $\mu$ of total volume (or ``mass'') equal to 1. 
Let $\Vol$ denote the set of all (un-normalized) volume forms on $M$ of finite total volume.
While for most applications one can think of a compact manifold $M$, it is also convenient to keep in mind the case of $M=\mathbb R^n$ with Gaussian densities on it.

Throughout the paper, we consider infinite-dimensional manifolds and Lie groups, such as the spaces of smooth normalized densities $\Dens(M)$, smooth volume forms $\Vol$, and smooth diffeomorphisms $\Diff$.
In the smooth, $C^\infty$ category, the manifold structures are modelled on Fréchet spaces.
Alternatively, one can work in the category of Banach manifold via completions in the Sobolev $H^s$ category (which requires $s>n/2+1$ as a consequence of the Sobolev embedding theorem).
For details on these settings, we refer to \cite{Ha1982,KhMiMo2020,Om2017} and references therein.

Let $\DiffR$ denote the direct product Lie group.
A left action of $\DiffR$ on $\Vol$ is given by $(\varphi,m)\cdot \varrho = m\, \varphi_*\varrho$ for 
$\varphi\in \Diff$, $\varrho\in \Vol$, and $m\in \mathbb R_+$. Fixing a Riemannian volume form $\mu$, this left action endows  the product $\Diff\times\mathbb R_+$ with the structure of a principal $G$-bundle with projection
    \begin{align*}
        \pi\colon \Diff\times\mathbb R_+ &\to \Vol \\ (\varphi,m) &\mapsto m\,\varphi_*\mu
    \end{align*}
and corresponding isotropy subgroup $G$ given by
\[
    G = \{ (\varphi, m)\mid m\, \varphi_*\mu = \mu \} = \SDiff\times \{ 1\}.
\]
It follows that $m=1$ by taking the integral.
The Lie algebra of $G$ is thus
\[
    \mathfrak{g} = \mathfrak{X}_\mu(M)\times \{ 0\}.
\]
The tangent space of the fibre through $(\varphi,m)$, denoted $\mathcal V_{(\varphi,m)} = \ker d\pi_{(\varphi,m)}$, gives the vertical distribution associated with the bundle $\pi$.

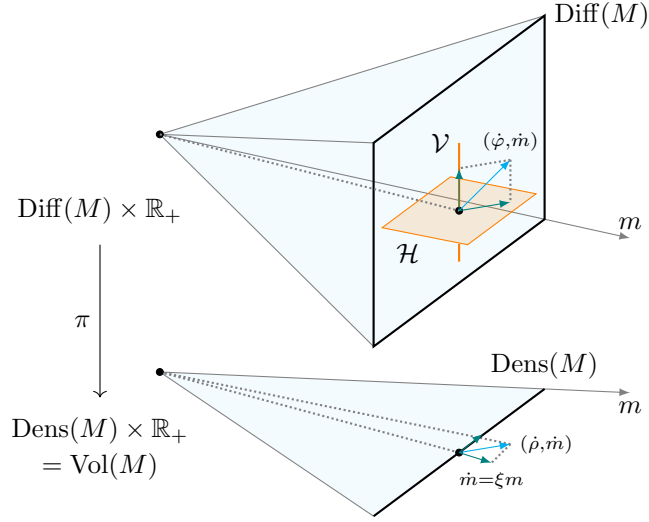
\begin{figure}
    \centering

    \begin{tikzpicture}[scale=2.25]
        \coordinate (A) at (-1.25,1.25); 
        \coordinate (B) at (-1.25,-0.15); 
        \coordinate (C) at (0.5,0.8); 
        \coordinate (D) at (0.5,-0.625); 

        \fill[cyan, fill opacity=0.05] (A) -- (0,0) -- (1,0.75) -- (1,1.95) -- cycle;
        \fill[cyan, fill opacity=0.05] (B) -- (0,-1) -- (1,-0.25) -- cycle;
        
        \draw[gray] (A) -- (0,0);
        \draw[gray] (A) -- (0,1.2);
        \draw[gray] (A) -- (1,1.95);
        \draw[gray, -{latex}] (A) -- (1.5,-2*1.5/9+35/36) node[above, black] {$m$};
        
        \filldraw (A) circle(0.02);
        \draw[thick] (0,0) -- (0,1.2) -- (1,1.95) node[right] {$\Diff$} -- (1,0.75)  -- cycle;

        \filldraw[orange, fill opacity=0.15] (0.1-0.05,0.7) -- (0.5-0.05,0.7 + 0.75*0.4) -- (0.9+0.05,0.9) -- (0.5+0.05,0.9 - 0.75*0.4) -- cycle;
        \node at (0.2,0.55) {$\mathcal H$};

        \draw[thick, orange] (C) -- (0.5,1.2);
        \draw[thick, orange] (0.5,0.605) -- (0.5,0.5);
        \node[left] at (0.5,1.2) {$\mathcal V$};

        \filldraw (C) circle(0.02);
        \draw[cyan,-{latex}] (C) -- ($(C)+(0.3,0.3)$) node[above, black] {$\scriptstyle(\dot\varphi,\dot m)$};
        \draw[teal,-{latex}] (C) -- ($(C)+(0.3,0.05)$);
        \draw[teal,-{latex}] (C) -- ($(C)+(0,0.25)$);

        \draw[thick, densely dotted, gray] (A) -- (C); 
        \draw[thick, densely dotted, gray] ($(D)+(0.3,0.05)$) -- (B);
        \draw[thick, densely dotted, gray] (B) -- (D); 
        \draw[thick, densely dotted, gray] ($(D)+(0.3,0.05)$) -- ($(D)+(0.19,-0.06)$); 
        \draw[thick, densely dotted, gray] ($(C)+(0.3,0.3)$) -- ($(C)+(0.3,0.05)$); 
        \draw[thick, densely dotted, gray] ($(C)+(0.3,0.3)$) -- ($(C)+(0,0.25)$); 

        \draw[gray] (B) -- (0,-1);
        \draw[gray, -{latex}] (B) -- (1.5,{-37/180-(2/45)*(1.5)}) node[black, below] {$m$};
        
        \filldraw (B) circle(0.02);
        \draw[thick] (0,-1) -- (1,-0.25) node[above] {$\Dens(M)$};
            
        \filldraw (D) circle(0.02);
        \draw[cyan, -{latex}] (D) -- ($(D)+(0.3,0.05)$) node[right, black] {$\scriptstyle(\dot\rho,\dot m)$};
        \draw[teal, -{latex}] (D) -- ($(D) + (0.139,0.11)$);
        \draw[teal, -{latex}] (D) -- ($(D)+(0.19,-0.06)$) node[below, black] {$\scriptstyle \dot m = \xi m$};

        \node at (-1.6,0.8) {$\Diff\times\mathbb R_+$};
        \draw[->] (-1.6,0.6) -- (-1.6,-0.3) node [midway, left] {$\pi$};
        \node at  (-1.6,-0.6) {$\displaystyle\genfrac{}{}{0pt}{}{\Dens(M)\times\mathbb R_+}{= \Vol}$};
    \end{tikzpicture}
    \caption{Illustration of the submersion between two conical extensions.}
    \label{fig:Conic-submersion}
\end{figure}

\begin{lemma}
    The vertical distribution for $\pi\colon\Diff\times\mathbb R_+\to\Vol$ is given by
        \[
            \mathcal V_{(\varphi,m)} =  \left\{(v\circ\varphi,0)\mid \operatorname{div}(\rho v) = 0 \quad\text{for}\quad \rho = \frac{m\,\varphi_*\mu}{\mu}\right\}.
        \]
\end{lemma}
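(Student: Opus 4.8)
The plan is to compute the differential $d\pi_{(\varphi,m)}$ on an arbitrary tangent vector and read off its kernel. First I would parametrize $T_{(\varphi,m)}(\Diff\times\mathbb{R}_+)$ by pairs $(v\circ\varphi,\dot m)$, writing the $\Diff$-component in the right-trivialized form $\dot\varphi=v\circ\varphi$ for a vector field $v$ on $M$, together with $\dot m\in\mathbb{R}$. Since $\pi(\varphi,m)=m\,\varphi_*\mu$ is the product of the scalar $m$ with the pushforward $\varphi_*\mu$, the Leibniz rule gives
\[
    d\pi_{(\varphi,m)}(v\circ\varphi,\dot m)=\dot m\,\varphi_*\mu+m\,\frac{d}{dt}\Big|_{t=0}(\varphi_t)_*\mu,
\]
where $\varphi_t$ is any path with $\varphi_0=\varphi$ and $\partial_t\varphi_t|_{t=0}=v\circ\varphi$.

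The key ingredient is the derivative of the pushforward. By the transport (continuity) equation for a density carried by a flow one has $\frac{d}{dt}\big|_{t=0}(\varphi_t)_*\mu=-\mathcal{L}_v(\varphi_*\mu)$, whence
\[
    d\pi_{(\varphi,m)}(v\circ\varphi,\dot m)=\dot m\,\varphi_*\mu-m\,\mathcal{L}_v(\varphi_*\mu).
\]
Substituting $\varphi_*\mu=\rho\mu/m$ for $\rho=m\,\varphi_*\mu/\mu$ and using the identity $\mathcal{L}_v(\rho\mu)=\operatorname{div}(\rho v)\,\mu$ (Cartan's formula together with the product rule for the divergence taken with respect to $\mu$), the condition $d\pi_{(\varphi,m)}(v\circ\varphi,\dot m)=0$ reduces to the single scalar equation
\[
    \operatorname{div}(\rho v)=\frac{\dot m}{m}\,\rho.
\]
Each of these manipulations is an equivalence, so at this stage the kernel is exactly the set of $(v\circ\varphi,\dot m)$ satisfying this equation.

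Finally I would integrate this identity against $\mu$ over $M$. The left-hand side integrates to zero by the divergence theorem (using compactness of $M$, or the decay of $\rho v$ in the Gaussian case on $\mathbb{R}^n$), while $\int_M\rho\,\mu=m\int_M\varphi_*\mu=m$ since $\int_M\varphi_*\mu=\int_M\mu=1$. Hence $\dot m=0$, and feeding this back into the scalar equation yields $\operatorname{div}(\rho v)=0$, which is precisely the claimed description of $\mathcal{V}_{(\varphi,m)}$. The one computation requiring care is the sign and form of $\frac{d}{dt}(\varphi_t)_*\mu$; once that transport identity is pinned down, the crucial observation is that integrating the kernel condition forces $\dot m=0$, collapsing the naive two-parameter family of candidates into the stated divergence-free family.
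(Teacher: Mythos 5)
Your proof is correct and follows essentially the same route as the paper: compute $d\pi_{(\varphi,m)}$ on a right-trivialized tangent vector, reduce the kernel condition to the scalar equation $\operatorname{div}(\rho v)=\tfrac{\dot m}{m}\rho$, and integrate over $M$ to force $\dot m=0$ and hence $\operatorname{div}(\rho v)=0$. The only differences are cosmetic (you write $\dot m$ where the paper uses $\xi m$, and you spell out the divergence-theorem hypotheses), and your bookkeeping of the factor of $m$ is in fact slightly cleaner than the paper's.
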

\begin{proof}
    Given a curve $(\varphi(t),m(t)) \in \Diff\times\mathbb R_+$ with $(\varphi(0),m(0)) = (\varphi,m)$,  we have that for $(\dot\varphi(0),\dot m(0)) = (v\circ\varphi,\xi \,m )\in T_{(\varphi, m )}(\Diff\times\mathbb R_+)$:
    \begin{equation}\label{eq:bundle_projection_derivative}
        \begin{aligned}
            d\pi_{(\varphi, m )}(v\circ\varphi,\xi m ) &= \left.\frac{d}{dt}\right|_{t=0} m (t)\varphi(t)_*\mu \\
            &= \xi m \,\varphi_*\mu +  m \left.\frac{d}{dt}\right|_{t=0}\varphi(t)_*\mu \\
            &= \xi\rho\mu -  m\,  L_v\varphi_*\mu\,  = \, \xi\rho\mu -  m\, \Div(\rho v)\mu.
        \end{aligned}
    \end{equation}
    Thus, $(v\circ\varphi,\xi\, m )\in \mathcal V_{(\varphi, m )}$ if and only if $\xi\rho =  m \Div(\rho v)$. Now, by integrating the both sides against $\mu$ over $M$ we see that the integral of the divergence is zero. This implies that the constant $\xi=0$, which in turn implies that the divergence is zero point-wise. This concludes the proof.
\end{proof}

\subsection{A natural metric for unbalanced optimal transport (UOT)}

Consider the following metric on the direct product group $\DiffR$:
\begin{equation}\label{eq:new_metric_direct_product}
    \G[(\varphi, m )]((\dot\varphi,\dot m ), (\dot\varphi,\dot m )) =  m \int_M \lvert \dot\varphi\rvert^2 \mu + \frac{\dot m ^2}{ m }
    = \int_M \lvert v \rvert^2 \varrho +  m \xi^2
\end{equation}
for variables $v = \dot\varphi\circ\varphi^{-1}$, $\xi = \dot m / m $, and $\varrho =  m \varphi_*\mu$.

\begin{remark}\label{rem:change_to_cone}
Recall that for a Riemannian manifold $N$ with metric  $g(v,v)$ its conical extension $\cone(N):=N\times \mathbb R_+$ is a Riemannian manifold with metric $r^2g(v,v)+dr^2$.
Consequently, the above product group $\DiffR$ is a natural conical extension of the most straightforward $L^2$ metric on  $\Diff$ given by 
$$
 \langle \dot\varphi, \dot\varphi\rangle_{\varphi} =  \int_M \lvert \dot\varphi\rvert^2 \mu\,.
$$
Indeed, by changing variables $ m =r^2$ (implying $\dot  m =2r\dot r$) we come to the conical extension $\DiffR$ 
with metric
\begin{equation}\label{eq:new_metric_conic}
    \langle (\dot\varphi,\dot r), (\dot\varphi,\dot r)\rangle_{(\varphi,r)} = r^2\int_M \lvert \dot\varphi\rvert^2 \mu + 4{\dot r^2}\,.
\end{equation}
\end{remark}


The orthogonal complement of the vertical distribution with respect to the metric on $\Diff\times\mathbb R_+$ gives the horizontal distribution of the bundle.

\begin{lemma}\label{lem:hor_true_WFR}
    For the metric $\G$ in equation \eqref{eq:new_metric_direct_product}, the horizontal distribution at $(\varphi, m )$ is given by
    \begin{align*}
        \mathcal H_{(\varphi, m )} = \{ (\nabla \theta\circ\varphi, \xi m )\mid \theta\in C^\infty(M), \xi \in \mathbb{R}\} &\simeq \{ \theta\in C^\infty(M) \}\,, \\
     {where}\quad   \left(\nabla\theta\circ\varphi,\int_M\theta\varrho\right) & \leftrightarrow  \theta\,.
    \end{align*}
\end{lemma}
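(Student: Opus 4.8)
The plan is to identify $\mathcal H_{(\varphi,m)}$ as the $\G$-orthogonal complement of the vertical distribution from the previous lemma, and to reduce the resulting weighted orthogonality condition to a classical Helmholtz decomposition. First I would parametrize an arbitrary tangent vector at $(\varphi,m)$ as $(w\circ\varphi,\eta m)$ for a vector field $w$ and $\eta\in\mathbb R$, and use the second form of the metric in \eqref{eq:new_metric_direct_product}, whose polarization gives $\G[(\varphi,m)]((w\circ\varphi,\eta m),(v\circ\varphi,\zeta m)) = \int_M\langle w,v\rangle\varrho + m\eta\zeta$. Since every vertical vector has the form $(v\circ\varphi,0)$ with $\Div(\rho v)=0$, its $\dot m$-component vanishes, so the horizontality condition collapses to $\int_M\langle w,v\rangle\varrho = 0$ for all such $v$; in particular the scalar $\eta$ is left entirely unconstrained.

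The key step is then to unweight this condition. Writing $\varrho=\rho\mu$ and substituting $u=\rho v$, the constraint $\Div(\rho v)=0$ becomes simply $\Div u=0$, while the pairing becomes $\int_M\langle w,u\rangle\mu=0$. As $v$ ranges over all fields with $\Div(\rho v)=0$, the field $u$ ranges over all divergence-free fields (using $\rho>0$), so $w$ must be $L^2(\mu)$-orthogonal to every divergence-free vector field. By the Hodge--Helmholtz decomposition in the Sobolev class fixed above, the $L^2(\mu)$-orthogonal complement of the divergence-free fields is precisely the space of gradients, hence $w=\nabla\theta$ for some $\theta\in C^\infty(M)$. Together with the freedom in $\eta$, this yields $\mathcal H_{(\varphi,m)}=\{(\nabla\theta\circ\varphi,\xi m)\mid\theta\in C^\infty(M),\ \xi\in\mathbb R\}$.

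Finally I would verify that the stated correspondence $\theta\leftrightarrow(\nabla\theta\circ\varphi,\int_M\theta\varrho)$ is a bijection $C^\infty(M)\to\mathcal H_{(\varphi,m)}$, which pins down the otherwise redundant parametrization. The only ambiguity in writing $w=\nabla\theta$ is the additive constant in $\theta$; adding a constant $c$ leaves $\nabla\theta$ unchanged but shifts $\int_M\theta\varrho$ by $c\int_M\varrho=cm$, using $\int_M\varrho=m$. Hence the map is injective, and given any horizontal $(\nabla\psi\circ\varphi,\xi m)$ one solves $\int_M(\psi+c)\varrho=\xi m$ uniquely for $c$ to obtain surjectivity.

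The main obstacle I anticipate is not the algebra but the functional-analytic justification of the decomposition and of the change of variables $u=\rho v$ in the chosen Sobolev category: one must check that multiplication by $\rho$ and by $1/\rho$ preserves the relevant function spaces and that the splitting into gradient and divergence-free parts is valid there. On a closed manifold, or on $\mathbb R^n$ with the stated integrability, this is standard, so the genuine content lies in the clean reduction carried out in the second step.
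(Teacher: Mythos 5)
Your proof is correct and follows essentially the same route as the paper: characterize $\mathcal H_{(\varphi,m)}$ as the $\G$-orthogonal complement of the vertical distribution, use a Hodge-type decomposition to conclude that the vector-field part of a horizontal vector is a gradient, and absorb the free scalar $\xi$ into the additive constant of $\theta$ via $\xi m = \int_M\theta\varrho$. The only difference is cosmetic: where the paper cites the generalized (weighted) Hodge decomposition, you derive it from the classical one through the substitution $u=\rho v$, and you verify the bijectivity of the identification with $C^\infty(M)$ in more detail than the paper does.
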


\begin{proof}
The vertical distribution $\mathcal V_{(\varphi, m )}$ consists of $(v\circ\varphi,0)$ where $v$ is divergence free with respect to $\varrho =  m \,\varphi_*\mu$, i.e., $\operatorname{div}(\rho v) = 0$.
    Thus it follows from the (generalized) Hodge decomposition and the choice of metric \eqref{eq:new_metric_direct_product}
   that if $(u\circ\varphi,\xi m ) \in \mathcal H$ then $u = \nabla \theta$ is a gradient vector field.
    It now follows that $(\nabla\theta\circ\varphi,\xi m )$ is orthogonal to $\mathcal V_{(\varphi, m )}$ for any $\xi\in\mathbb{R}$.
    In particular, we may encode $\xi$ in the arbitrary constant of $\theta$ for $\nabla\theta$.
    The choice $\xi m  = \int_M \theta\varrho$ gives a geometric identification of $\mathcal H_{(\varphi, m )}$ with the space $C^\infty(M)$.
\end{proof}
 
\begin{theorem}\label{thm:new_metric_induced}
    The metric $\G$ in \eqref{eq:new_metric_direct_product} projects as a Riemannian submersion to the metric $\bG$ on $\Vol$ given at any point
    $\rho\in \Vol$ by
    \begin{equation}\label{eq:new_metric_base}
        \bG[\varrho](\dot\varrho,\dot\varrho) = \int_M \left( \lvert \nabla\theta\rvert^2 + \xi^2 \right)\varrho , \quad \dot\rho = -\Div(\rho\nabla\theta) + \xi\rho ,\quad \int_M\dot\varrho =  m \xi . 
    \end{equation}
    Furthermore, the variable $\theta\in C^\infty(M)$, defined by the equations above together with 
    \begin{equation*}
        \xi  m  = \int_M \theta\varrho \, ,
    \end{equation*}  
    is Legendre-dual to $\dot\varrho$ under the pairing 
    \begin{equation*}
        \langle\dot\varrho,\theta \rangle = \int_M \theta \dot\varrho.
    \end{equation*}
    Consequently, the Hamiltonian on $T^*\Vol$ corresponding to the metric $\bG$ is
    \begin{equation}\label{eq:ham_new_metric}
        H(\varrho,\theta) = \frac{1}{2}\int_M \lvert \nabla\theta\rvert^2\varrho +  \underbrace{\frac{1}{2 m }\left( \int_M \theta\varrho \right)^2}_{\frac{1}{2} m \xi^2} .
    \end{equation}
\end{theorem}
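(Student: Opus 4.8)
The plan is to verify the three assertions in sequence, each reducing to a short change-of-variables or integration-by-parts once the bundle picture of Lemma~\ref{lem:hor_true_WFR} is available. First I would confirm that $\pi$ is genuinely a Riemannian submersion, which amounts to checking that the total-space metric $\G$ is invariant under the right action of the structure group $G=\SDiff\times\{1\}$, acting by $(\varphi,m)\mapsto(\varphi\circ\psi, m)$ with $\psi_*\mu=\mu$. Since the integrand $\lvert\dot\varphi\rvert^2$ is pointwise and $\int_M(f\circ\psi)\,\mu=\int_M f\,\mu$ for volume-preserving $\psi$, the term $m\int_M\lvert\dot\varphi\rvert^2\mu$ is unchanged, and $\dot m^2/m$ does not involve $\varphi$ at all. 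Hence $\G$ is $G$-invariant, so declaring $d\pi|_{\mathcal H}$ an isometry onto $T_\varrho\Vol$ defines $\bG$ independently of the representative in the fibre $\pi^{-1}(\varrho)$.

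Next I would evaluate $\G$ on the horizontal lift $(\nabla\theta\circ\varphi,\xi m)$ of $\dot\varrho$. By \eqref{eq:new_metric_direct_product} this equals $m\int_M\lvert\nabla\theta\circ\varphi\rvert^2\mu+\xi^2 m$. The pushforward identity $m\int_M(g\circ\varphi)\,\mu=\int_M g\,\varrho$, valid because $\varrho=m\,\varphi_*\mu$, converts the first term into $\int_M\lvert\nabla\theta\rvert^2\varrho$; and since $\mu$ has unit mass one has $\int_M\varrho=m$, so $\xi^2 m=\xi^2\int_M\varrho$. Summing gives $\int_M(\lvert\nabla\theta\rvert^2+\xi^2)\varrho$, which is \eqref{eq:new_metric_base}, while the evolution law $\dot\rho=-\Div(\rho\nabla\theta)+\xi\rho$ and its integral $\int_M\dot\varrho=m\xi$ are read directly off \eqref{eq:bundle_projection_derivative}.

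For the Legendre duality I would polarize the metric and compare $\bG[\varrho](\dot\varrho,\dot\varrho')$ with the pairing $\int_M\theta\,\dot\varrho'$ against a second horizontal vector $\dot\varrho'\leftrightarrow(\theta',\xi')$. Writing $\dot\varrho'=\bigl(-\Div(\rho\nabla\theta')+\xi'\rho\bigr)\mu$ and integrating the divergence term by parts produces $\int_M\theta\,\dot\varrho'=\int_M\nabla\theta\cdot\nabla\theta'\,\varrho+\xi'\int_M\theta\varrho$. Invoking the normalization $\xi m=\int_M\theta\varrho$ from Lemma~\ref{lem:hor_true_WFR} turns this into $\int_M\nabla\theta\cdot\nabla\theta'\,\varrho+\xi\xi' m=\bG[\varrho](\dot\varrho,\dot\varrho')$, so $\theta$ represents the metric covector of $\dot\varrho$, i.e.\ it is the Legendre-dual momentum. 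The Hamiltonian is then $H=\tfrac12\bG[\varrho](\dot\varrho,\dot\varrho)$ expressed through the momentum, and substituting $\xi^2 m=\tfrac1m\bigl(\int_M\theta\varrho\bigr)^2$ into $\tfrac12\int_M(\lvert\nabla\theta\rvert^2+\xi^2)\varrho$ yields exactly \eqref{eq:ham_new_metric}.

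I expect the change-of-variables and integration-by-parts to be entirely routine; the one place demanding care is the bookkeeping of the additive constant in $\theta$ together with the mass weights. Since $\nabla\theta$ fixes $\theta$ only up to a constant whereas $\xi$ is genuine extra data, the identification $\mathcal H\cong C^\infty(M)$ must be used precisely in the form $\xi m=\int_M\theta\varrho$ to absorb that constant, and keeping the factors of $m$ consistent across $\xi=\dot m/m$, $\int_M\varrho=m$, and $\xi m=\int_M\theta\varrho$ is what makes the mass contribution collapse to the stated $\tfrac1{2m}(\int_M\theta\varrho)^2$. This is the only step where an error could realistically hide.
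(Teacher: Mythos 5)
Your proposal is correct and follows essentially the same route as the paper's proof: right-invariance of $\G$ under the isotropy group gives the submersion, evaluation of $\G$ on the horizontal lift $(\nabla\theta\circ\varphi,\xi m)$ together with \eqref{eq:bundle_projection_derivative} gives the induced metric, and integration by parts combined with the normalization $\xi m=\int_M\theta\varrho$ gives the duality and the Hamiltonian. The only cosmetic difference is in the duality step: you polarize $\bG$ against an arbitrary second tangent vector $\dot\varrho'$ and show $\langle\dot\varrho',\theta\rangle=\bG[\varrho](\dot\varrho,\dot\varrho')$, whereas the paper differentiates the quadratic Lagrangian along a variation $\dot\varrho_\epsilon$; since $\left.\frac{d}{d\epsilon}\right|_{\epsilon=0}L(\varrho,\dot\varrho_\epsilon)=\bG[\varrho](\dot\varrho,\delta\dot\varrho)$ by bilinearity, these are the same computation in different packaging.
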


\begin{proof}
    First, notice that the metric $\mathcal G$ is invariant under the right action of the isotropy subgroup $G$ on the tangent bundle $T(\operatorname{Diff}(M)\times \mathbb{R}_+)$.
    Thus, $\mathcal G$ is compatible with the principal bundle structure, so it indeed induces a metric $\bar {\mathcal{G}}$ on the base $\operatorname{Vol}(M)$. 
    Now take an arbitrary horizontal vector $(\nabla\theta\circ\varphi,\xi m ) \in \mathcal{H}_{(\varphi,m)}$.
    If $\varrho = \pi(\varphi,m)$ and $\dot\varrho = d\pi_{(\varphi,m)}(\nabla\theta\circ\varphi,\xi m )$ is the lifted bundle projection, then, by definition,
    \begin{equation*}
        \bar{\mathcal G}_\varrho(\dot\varrho, \dot\varrho) \equiv \mathcal{G}_{(\varphi,m)}(\nabla\theta\circ\varphi,\xi m ,\nabla\theta\circ\varphi,\xi m ) =
        \int_M \lvert \nabla\theta \rvert^2 \varrho +  m \xi^2.
    \end{equation*}
    From equation \eqref{eq:bundle_projection_derivative} for $d\pi$ we get that $\dot\rho = -\operatorname{div}(\rho\nabla\theta) + \xi\rho$.
    Applying integration and using that $m = \int_M \varrho$, we see that 
    \begin{equation*}
        \int_M \dot\varrho =  m \xi \, .
    \end{equation*}
    This confirms the formula~\eqref{eq:new_metric_base} for the induced metric.

    For the second statement, that $\theta$ is in fact the Legendre transform, the variable Legendre-dual to $\dot\varrho$ is defined by $\frac{\delta L}{\delta\dot\varrho}$ where $L$ is the Lagrangian corresponding to $\bar{\mathcal{G}}$. 
    Given a variation $\dot\varrho_\epsilon = \dot\varrho + \epsilon\,\delta\dot\varrho$ we obtain
    \begin{equation}\label{eq:Lagrangian}
        \left.\frac{d}{d\epsilon}\right|_{\epsilon=0}L(\varrho,\dot\varrho_\epsilon) = \underbrace{\int_M\left(\nabla\theta\cdot\nabla\left.\frac{d}{d\epsilon}\right|_{\epsilon=0}\theta_\epsilon\right)\,\varrho}_\text{(i)} + \underbrace{\xi m\left.\frac{d}{d\epsilon}\right|_{\epsilon=0}\xi_\epsilon}_\text{(ii)}.
    \end{equation}
    On the other hand, from the definition of $\theta$ in \eqref{eq:new_metric_base} we see that
    \begin{equation}\label{eq:variation_of_theta}
        \delta\dot\rho = \left.\frac{d}{d\epsilon}\right|_{\epsilon=0}\dot\rho_\epsilon = -\Div\left(\rho\nabla\left.\frac{d}{d\epsilon}\right|_{\epsilon=0}\theta_\epsilon\right) + \rho\left.\frac{d}{d\epsilon}\right|_{\epsilon=0}\xi_\epsilon . 
    \end{equation}
    By applying the divergence theorem to the term (i) and then comparing \eqref{eq:Lagrangian} with \eqref{eq:variation_of_theta}, we see that $\langle\delta\dot\varrho,\theta\rangle = \left.\frac{d}{d\epsilon}\right|_{\epsilon=0}L(\varrho,\dot\varrho_\epsilon)$, giving $\theta$ as the Legendre-dual variable of $\dot\varrho$.
    The form of the Hamiltonian follows readily.    
\end{proof}

Equipping $\Diff\times\mathbb R_+$ and $\Vol$ with the metrics $\G$  and $\bG$  (see \eqref{eq:new_metric_direct_product} and  \eqref{eq:new_metric_base}) makes $\pi\colon\Diff\times\mathbb R_+\to\Vol$ into a Riemannian submersion, which gives a correspondence between geodesics in $\Vol$ and  horizontal geodesics in $\Diff\times\mathbb R_+$ (i.e. those tangent to the horizontal distribution),  when given an initial point in the fiber.


\subsection{Dynamical and static formulations}
 
One can give the following dynamical formulation of conical unbalanced transport.

\begin{definition}
    The \emph{conical Wasserstein distance} $\mathit{WC}(\varrho_0,\varrho_1)$ between densities $\varrho_0,\varrho_1 \in \Vol$ (of possibly different total masses) is given by the following formula.
    \[
        \mathit{WC}^2(\varrho_0,\varrho_1) = \inf_{u,\xi, \varrho}\int_{0}^1 \left( \int_M \left(\lvert u\rvert^2 + \xi^2 \right)\varrho\right) \, dt\,,
    \]
    over time-dependent vector fields $u$, volume forms $\varrho$, and constants $\xi$ related by the constraints
    \[
        \dot\rho = -\Div(\rho u) + \xi\rho ,\quad \int_M\dot\varrho =  \xi \int_M \varrho , \quad \rho(0) = \rho_0,\quad \rho(1) = \rho_1.   
    \]  
\end{definition}

The convexity of the dynamical formulation of standard optimal transport, as studied by Benamou and Brenier~\cite{BeBr2000}, carries over to the conical extension.
Indeed, in the variables $\bar\rho = \rho/m = \frac{\varphi_*\mu}{\mu} > 0$, $w =\bar\rho \nabla\theta$, and $r = \sqrt{m} > 0$ it becomes
\begin{equation*}
    \mathit{WC}^2(\varrho_0,\varrho_1) = \inf_{w,\bar\rho, r} \int_{0}^1 \left( r^2 \int_M \frac{\lvert w\rvert^2}{\bar\rho}\mu + 4\dot r^2\right) \, dt\,,
\end{equation*}
i.e., a minimization of a convex functional,
under the affine constraints
\begin{align*}
    \dot{\bar\rho} + \operatorname{div} w = 0, \quad  &\bar\rho(0,\cdot) = \rho_0/m_0 ,\quad \bar\rho(1,\cdot) = \rho_1/m_1, \\ & r(0) = \sqrt{m_0}, \quad r(1) = \sqrt{m_1}. 
\end{align*}      
This convex minimization formulation is important for existence and uniqueness of solutions.


For the corresponding static formulation, the distance function $\mathit{WC}(\varrho_0,\varrho_1)$ is given in terms of the Riemannian metric \eqref{eq:new_metric_base} as
\[
    \mathit{WC}^2(\varrho_0,\varrho_1) = \inf_{\varrho}\int_{0}^1 
    \bG[\varrho](\dot\varrho,\dot\varrho) \; dt ,
\]
for curves $\varrho(t)$ with $\varrho(0) = \varrho_0$ and $\varrho(1) = \varrho_1$.
\smallskip

Note that the distance function $WC$ is necessarily implicit, as it depends on the metric on the Riemannian manifold $M$. It is bounded above via the (adjusted) Wasserstein distance $W$ between denities of the unit total mass
as follows: for densities $\varrho_0$ and $ \varrho_1$ of masses $m_0$ and $m_1$ respectively
one has the upper bound 
$$
WC^2(\varrho_0,\varrho_1)\le \min(m_0, m_1)\cdot W^2(\varrho_0/m_0,\varrho_1/m_1) +4(\sqrt{m_1}-\sqrt{m_0})^2\,.
$$
It follows from the orthogonality of the radial direction $r=\sqrt m$ to $\Dens(M)$.

\subsection{Geodesic equations}

The equations of geodesics for the above metrics can be computed in either Lagrangian or Hamiltonian form. The Lagrangian form of the geodesic equations, i.e. equations in the corresponding tangent bundle, for the conical manifold can be obtained using the formulas for warped Riemannian manifolds (see \cite{oneilsemi}). 
We review this approach in Appendix~\ref{appendix}. 
Here we derive the geodesic equations on the cotangent bundle, i.e., as the Hamiltonian equations for the Hamiltonian \eqref{eq:ham_new_metric}.

\begin{theorem}\label{thm:Hameqs_cone}
    The geodesic equations in the Hamiltonian form for the Hamiltonian \eqref{eq:ham_new_metric} are given by
    \begin{align*}
        \dot\rho &= -\operatorname{div}(\rho\nabla\theta) + \xi\rho \\
        \dot\theta &= -\frac{1}{2}\lvert \nabla \theta\rvert^2 - \xi\theta + \frac{\xi^2}{2}.
    \end{align*}
\end{theorem}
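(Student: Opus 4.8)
The plan is to read the stated system off as the canonical Hamilton equations on $T^*\Vol$ for the Hamiltonian \eqref{eq:ham_new_metric}, using the pairing $\langle\dot\varrho,\theta\rangle = \int_M\theta\,\dot\varrho$ identified in Theorem~\ref{thm:new_metric_induced}. With $\varrho$ the base (position) variable and $\theta$ the fibre (momentum) variable, the equations take the form $\dot\varrho = \delta H/\delta\theta$ and $\dot\theta = -\delta H/\delta\varrho$, so the whole argument reduces to computing the two functional derivatives of $H$. Throughout I would keep in mind that both the total mass $m = \int_M\varrho$ and the quantity $\xi = \frac{1}{m}\int_M\theta\varrho$ are themselves functions of $(\varrho,\theta)$, a dependence that is the source of the only nonobvious terms.

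First I would compute $\delta H/\delta\theta$. Varying $\theta\mapsto\theta+\epsilon\eta$ and recalling $\xi m = \int_M\theta\varrho$, the kinetic term of $H$ contributes $\int_M\nabla\theta\cdot\nabla\eta\,\varrho$, while the mass term contributes $\xi\int_M\eta\,\varrho$. Integrating the gradient term by parts against the volume form turns it into $-\int_M\eta\,\Div(\rho\nabla\theta)$, so that $\delta H/\delta\theta = -\Div(\rho\nabla\theta) + \xi\rho$ as a density. Pairing against $\eta$ via $\int_M\eta\,(\cdot)$ then yields the first equation; this is a reassuring consistency check, since the same relation $\dot\varrho = -\Div(\rho\nabla\theta)+\xi\rho$ was already recorded in \eqref{eq:new_metric_base}.

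The more delicate computation is $\delta H/\delta\varrho$, and I expect this to be the main obstacle. Varying $\varrho\mapsto\varrho+\epsilon\sigma$, one must differentiate the denominator $m$ as well as the numerator in the mass term $\frac{1}{2m}(\int_M\theta\varrho)^2$. Writing $A = \int_M\theta\varrho$, $B = \int_M\sigma$, and $C = \int_M\theta\sigma$, the quotient rule gives $\frac{d}{d\epsilon}\big|_{\epsilon=0}\frac{(A+\epsilon C)^2}{2(m+\epsilon B)} = \frac{AC}{m} - \frac{A^2 B}{2m^2} = \xi\int_M\theta\sigma - \frac{\xi^2}{2}\int_M\sigma$, while the kinetic term contributes $\frac{1}{2}\int_M|\nabla\theta|^2\sigma$. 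Collecting these and reading off the function paired against $\sigma$ gives $\delta H/\delta\varrho = \frac{1}{2}|\nabla\theta|^2 + \xi\theta - \frac{\xi^2}{2}$ as a function on $M$.

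Finally, $\dot\theta = -\delta H/\delta\varrho$ reproduces the second equation, the $+\xi^2/2$ term being exactly the contribution of differentiating the total mass in the denominator. I would close by remarking that this term, absent in the standard mass-preserving Benamou--Brenier setting, is precisely what encodes the conical direction, and by cross-checking that the resulting pair is compatible with the Legendre duality and the submersion picture established in Theorem~\ref{thm:new_metric_induced}.
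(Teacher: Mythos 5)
Your proposal is correct and follows essentially the same route as the paper's own proof: write Hamilton's equations as $\dot\varrho = \delta H/\delta\theta$, $\dot\theta = -\delta H/\delta\varrho$, compute both variations directly, and track the extra terms coming from the dependence of $m = \int_M\varrho$ and $\xi$ on the phase-space variables. Your quotient-rule bookkeeping for the mass term matches the paper's computation (where the variation of the denominator appears as $\dot m_0 = \int_M\delta\varrho$), and in fact your final display $\int_M\bigl(\tfrac{1}{2}|\nabla\theta|^2 + \xi\theta - \tfrac{\xi^2}{2}\bigr)\delta\varrho$ states the result more cleanly than the paper's last line, which contains a misplaced factor of $\tfrac{1}{2}$.
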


\begin{proof}
    Hamilton's equations are $\dot\varrho = \delta H/\delta \theta$ and $\dot\theta = -\delta H/\delta \varrho$. First, consider a variation $\theta_\epsilon = \theta + \epsilon\,\delta\theta$, where:
        \begin{align*}
            \left.\frac{d}{d\epsilon}\right|_{\epsilon=0}H(\varrho,\theta_\epsilon) 
            &= \int_M (\rho\nabla\theta)\cdot\nabla(\delta\theta)\,\mu + \frac{1}{m}\left(\int_M\theta\,\varrho\right)\left(\int_M\delta\theta\,\varrho\right) \\
            &= \int_M (\rho\nabla\theta)\cdot\nabla(\delta\theta)\,\mu + \int_M(\xi\rho)\delta\theta\,\mu \\
            &= \int_M \Big(\Div\left((\rho\nabla\theta)\delta\theta\right) - \Div(\rho\nabla\theta)\delta\theta + (\xi\rho)\delta\theta\Big)\,\mu \\
            &= \int_M \Big(- \Div(\rho\nabla\theta) + \xi\rho\Big)\delta\theta\,\mu,
        \end{align*}
    and so $\dot\rho = -\Div(\rho\nabla\theta) + \xi\rho$.

    Similarly, considering a variation $\varrho_\epsilon = \varrho + \epsilon\,\delta\varrho$:
        \begin{align*}
            \left.\frac{d}{d\epsilon}\right|_{\epsilon=0} H(\varrho_\epsilon,\theta)
            &= \frac12
            \int_M|\nabla\theta|^2\,\delta\varrho - \frac{\dot m_0}{2m^2}\left(\int\theta\,\varrho\right)^2 + \frac{1}{m}\left(\int_M\theta\,\varrho\right)\left(\int_M\theta\,\delta\varrho\right) \\
            &= \frac12
            \int_M|\nabla\theta|^2\,\delta\varrho - \frac{\dot m_0}{2m^2}(m\xi)^2 + \frac{1}{m}(m\xi)\left(\int_M\theta\,\delta\varrho\right),
        \intertext{but $\dot m_0 = \left.\frac{d}{d\epsilon}\right|_{\epsilon=0}\int_M\varrho_\epsilon = \int_M\delta\varrho$, so:}
            &= 
            \int_M\Big( \frac12|\nabla\theta|^2 - \frac{\xi^2}{2} + \xi\theta\Big)\,\delta\varrho,
        \end{align*}
    hence $\dot\theta = -\frac{1}{2}\lvert \nabla \theta\rvert^2 - \xi\theta + \frac{\xi^2}{2}$.
\end{proof}

Recall from above that $ m  = \int_M \varrho$ is the total volume and that $\xi = \int_M \theta\varrho /  m $ is the logarithmic derivative of $ m $.
The evolution of $ m $ and $\xi$ is described by the following theorem.

\begin{theorem}\label{thm:ham_geodesic}
    The variables $ m $ and $\xi$ satisfy the equations
    \begin{align*}
        \dot\xi &= \frac{1}{ m }\left( H(\varrho,\theta) -  m \xi^2 \right) \\
        \dot m  &=  m \xi .
    \end{align*}
    It can also be written as the second order equation
    \begin{equation*}
        \ddot m  = H .
    \end{equation*}
\end{theorem}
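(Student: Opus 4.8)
The plan is to differentiate the two defining scalar quantities $m=\int_M\varrho$ and $m\xi=\int_M\theta\varrho$ in time, substitute the Hamiltonian geodesic equations from Theorem~\ref{thm:Hameqs_cone}, and watch the resulting integrals reassemble into the Hamiltonian~\eqref{eq:ham_new_metric}. First I would establish the evolution of the mass. Differentiating $m=\int_M\varrho$ gives $\dot m=\int_M\dot\varrho$; inserting $\dot\varrho=-\Div(\rho\nabla\theta)\,\mu+\xi\varrho$, the divergence term integrates to zero (the divergence theorem on closed $M$, or decay at infinity for $M=\mathbb R^n$, as guaranteed by the Sobolev-class convention), leaving $\dot m=\xi\int_M\varrho=m\xi$. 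This is the second of the two first-order equations.

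Next I would differentiate $m\xi=\int_M\theta\varrho$, which by the product rule gives $\frac{d}{dt}(m\xi)=\int_M\dot\theta\,\varrho+\int_M\theta\,\dot\varrho$, and evaluate the two pieces separately. For the first, substituting $\dot\theta=-\tfrac12|\nabla\theta|^2-\xi\theta+\tfrac{\xi^2}{2}$ and using $\int_M\theta\varrho=m\xi$ together with $\int_M\varrho=m$ collapses it to $-\tfrac12\int_M|\nabla\theta|^2\varrho-\tfrac12 m\xi^2=-H$. For the second, inserting $\dot\varrho=-\Div(\rho\nabla\theta)\,\mu+\xi\varrho$ and integrating by parts to move the divergence onto $\theta$ yields $\int_M|\nabla\theta|^2\varrho+m\xi^2=2H$. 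Adding, the two contributions combine to $\frac{d}{dt}(m\xi)=-H+2H=H$.

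Finally, since $\dot m=m\xi$ from the first step, the identity $\frac{d}{dt}(m\xi)=H$ is exactly $\ddot m=H$, the claimed second-order equation; and expanding the left-hand side by the product rule, $\dot m\,\xi+m\dot\xi=H$, then solving for $\dot\xi$ using $\dot m=m\xi$ gives $\dot\xi=\frac1m\left(H-m\xi^2\right)$. The computation is essentially bookkeeping; the one genuinely delicate point is the precise cancellation of coefficients that turns the two integrals into $-H$ and $2H$ rather than unrelated quantities, together with the boundary/decay justification of the two integrations by parts, which is where the Sobolev-class and finite-total-mass assumptions enter. I would also remark that $H$ is conserved along the Hamiltonian flow, so $\ddot m$ is in fact constant, recovering the constant-acceleration (``constant buoyancy'') interpretation highlighted in the introduction.
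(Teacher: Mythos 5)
Your proof is correct and follows essentially the same route as the paper: substitute the Hamiltonian equations of Theorem~\ref{thm:Hameqs_cone} into the time derivative of $\int_M\theta\varrho$, integrate the divergence term by parts, and recognize the Hamiltonian~\eqref{eq:ham_new_metric}. The only difference is cosmetic bookkeeping --- you differentiate the product $m\xi$ to obtain $\ddot m = H$ first and then solve for $\dot\xi$, whereas the paper differentiates $\xi = \frac{1}{m}\int_M\theta\varrho$ directly (picking up the $-\xi^2$ quotient-rule term) and derives $\ddot m = H$ afterwards.
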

Since $H(\varrho,\theta)$ is constant along solutions, we obtain the following:
\begin{corollary}\label{cor:gravity}
The total volume $ m :=\int_M\varrho$ evolves with constant acceleration that  depends only on the energy level of the initial conditions.
\end{corollary}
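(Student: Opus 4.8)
The plan is to read the Corollary as the conjunction of two facts already within reach: the second-order identity $\ddot m = H$ from Theorem~\ref{thm:ham_geodesic}, and the conservation of the energy $H$ along geodesics. Granting the former, the entire statement reduces to showing that $H(\varrho,\theta)$ is a first integral of the flow and then integrating a trivial ordinary differential equation. So the first thing I would do is isolate precisely what must be proved: that $H$ is constant in $t$, and that its value is fixed by the initial data $(\varrho_0,\theta_0)$.

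For the conservation of energy I would argue as follows. By Theorem~\ref{thm:new_metric_induced} the geodesic flow of $\bG$ on $\Vol$ is the Hamiltonian flow on $T^*\Vol$ generated by the autonomous Hamiltonian $H(\varrho,\theta)$ of \eqref{eq:ham_new_metric}, with $\theta$ Legendre-dual to $\dot\varrho$. For any autonomous Hamiltonian the energy is automatically a first integral: differentiating along a solution and using Hamilton's equations $\dot\varrho = \delta H/\delta\theta$, $\dot\theta = -\delta H/\delta\varrho$ gives
\[
  \frac{d}{dt}H(\varrho,\theta) = \left\langle \frac{\delta H}{\delta\varrho},\dot\varrho\right\rangle + \left\langle \frac{\delta H}{\delta\theta},\dot\theta\right\rangle = \left\langle \frac{\delta H}{\delta\varrho},\frac{\delta H}{\delta\theta}\right\rangle - \left\langle \frac{\delta H}{\delta\theta},\frac{\delta H}{\delta\varrho}\right\rangle = 0,
\]
the cancellation being the antisymmetry of the canonical pairing. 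Hence $H(\varrho(t),\theta(t)) \equiv H(\varrho_0,\theta_0) =: E$ for all $t$, with $E$ determined solely by the initial point and velocity, i.e.\ by the energy level.

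With $E$ in hand the Corollary is immediate. Theorem~\ref{thm:ham_geodesic} gives $\ddot m = H = E$, a constant, so $m(t)$ obeys the constant-acceleration law $m(t) = m_0 + m_0\xi_0\,t + \tfrac12 E\,t^2$, exactly the kinematics of height in a uniform buoyancy field; the acceleration $E = H(\varrho_0,\theta_0)$ depends only on the energy level of the initial conditions, as claimed. For completeness I would also indicate why $\ddot m = H$ holds, since this is where the genuine computation sits: from $\dot m = m\xi$ one gets $\ddot m = m\xi^2 + m\dot\xi$, and substituting $m\dot\xi = H - m\xi^2$ from Theorem~\ref{thm:ham_geodesic} cancels the $m\xi^2$ terms to leave $\ddot m = H$.

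The main obstacle, were one not permitted to cite Theorem~\ref{thm:ham_geodesic}, would be establishing the $\dot\xi$ equation itself: differentiating $\xi = \tfrac1m\int_M\theta\varrho$ and inserting the geodesic equations of Theorem~\ref{thm:Hameqs_cone} produces the integral $\int_M(\dot\theta\varrho + \theta\dot\varrho)$, and one must verify by integration by parts that this expression collapses exactly to $H$. Given that identity, everything else is elementary. With the Theorem available, the only residual subtlety is the rigorous justification of $\frac{d}{dt}H=0$ in the infinite-dimensional setting, which is covered by the standing Sobolev hypotheses ($H^s$, $s>n/2+1$) that render all the integrals and integrations by parts legitimate.
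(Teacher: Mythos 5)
Your proposal is correct and follows essentially the same route as the paper: the paper derives $\ddot m = H$ in Theorem~\ref{thm:ham_geodesic} and then deduces the corollary from the conservation of the autonomous Hamiltonian $H$ along solutions, exactly as you do. Your write-up merely makes explicit the standard energy-conservation computation and the resulting quadratic formula for $m(t)$, which the paper leaves implicit.
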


In other words, the volume $m$ evolves as an object's height  in a constant  gravity or buoyancy field.
Note that in a  conical metric it is a common phenomenon that, depending of the boundary conditions, a geodesic joining two 
densities might enter the region where the total mass is smaller than the smallest of the two it connects.

\begin{proof}
    By construction $\dot m  = \xi m $.
    From $\xi m  = \int_{M} \theta\varrho$ we then get 
    \begin{align*}
        &\dot\xi = \frac{d}{dt} \frac{1}{ m } \int_M \theta\varrho = \frac{1}{ m } \int_M \left(  \dot\theta\varrho+\theta\dot\varrho\right) - \xi^2 \\
        &= \frac{1}{ m } \int_M \left(  \left( -\frac{1}{2}\lvert \nabla \theta\rvert^2 - \xi\theta + \frac{\xi^2}{2} \right)\varrho + \theta\left( -\operatorname{div}(\rho\nabla\theta) + \xi\varrho \right)\right) - \xi^2 \\
        &= \frac{1}{ m } \int_M \left( \frac{1}{2}\lvert \nabla \theta\rvert^2  \right)\varrho - \frac{\xi^2}{2}
        =  \frac{1}{ m } \left( \int_M  \frac{1}{2}\lvert \nabla \theta\rvert^2  \varrho + \frac{1}{2} m \xi^2 - \frac{1}{2} m \xi^2  \right) - \frac{\xi^2}{2} \\
        &= \frac{H(\varrho,\theta)}{ m } - \xi^2 .
    \end{align*}
    We then obtain that
    $
        \ddot m  = \dot m  \xi +  m \dot\xi =  m \xi^2 +  m  \frac{1}{ m }(H -  m \xi^2) = H.
    $
\end{proof}

\begin{remark}
    Note from equation \eqref{eq:ham_new_metric} that $H \geq \frac{1}{2}m\xi^2$ with equality if and only if $\theta$ is constant, which corresponds to the invariant subset of pure scalings of the density $\varrho$.
\end{remark}

\begin{remark}\label{rem:production}
    In the simple model just presented, mass is added or removed proportionally to $\rho$.
    It is easy to modify the model, so it has a localized ``production function'' $f = f(x) \geq 0$
    representing a fixed rate of supply or demand distribution over $M$.
    In the model above such a rate was constant, $f\equiv 1$, manifesting that the volume was added or subtracted uniformly over $M$, while using a nonconstant $f$ one can adjust the UOT model and make some regions of $M$ prefered to others.  
    Then, instead of the Hamiltonian \eqref{eq:ham_new_metric} we use 
    \begin{equation}\label{eq:ham_new_metric_reserve}
        H(\varrho,\theta) = \frac{1}{2}\int_M \lvert \nabla\theta\rvert^2\varrho +  \frac{1}{2 m }\left( \int_M \theta f \varrho \right)^2 ,
    \end{equation}
    where now $\xi m = \int_M \theta f \varrho$.
    The evolution of $\rho$ and $\theta$ then becomes
    \[
        \dot\rho = -\operatorname{div}(\rho\nabla\theta) + \xi f\rho  \, ,\qquad
        \dot\theta = - \frac{1}{2}\lvert \nabla\theta \rvert^2 - \xi f \theta + \frac{\xi^2}{2}.
    \]
\end{remark}

\begin{remark}
    Note that the geodesic equation in Theorem~\ref{thm:Hameqs_cone} retains a property of conical extensions; the radial projection of a geodesic curve in $\Vol$ corresponds to a Wasserstein geodesic on the space $\Dens(M)$ of normalized densities, albeit in a different parameterization and with a different total length.
    Indeed, it follows from the fact that a totally geodesic submanifold of a manifold remains totally geodesic after its conical extension, see the next lemma.
    Note, however, that this projection property for geodesics does not hold for extensions with non-constant production functions, cf.\ Remark~\ref{rem:production}.
\end{remark}

\begin{remark}
Another model for unbalanced optimal transport is given in \cite{Gangbo}.
It is also an extension by means of one extra dimension, and it can be viewed as a {\it cylindrical}-type, rather than {\it conical},
extension of the Wasserstein geometry discussed above. The dynamics of density is given by the equation
$ \dot\rho = -\Div(\rho u) + h$ where the last term $h=h(t)$ can be regarded as ``pumping” the constant density 
over the whole of manifold, and it replaces the linear term $\xi\rho$ proportional to the current density $\rho$ in the conical model. Then the dynamics of this extra variable $h(t)$ is governed by the vector field $u$ fulfilling the inviscid Burgers  equation, as in the standard optimal transport. This leads to the uniform change of $h(t)$ ($\dot h={\rm const}$) and
a somewhat peculiar numerical behavior observed in \cite{Gangbo}.
\end{remark}


\subsection{A finite-dimensional version of the simple  UOT}\label{sect:finite1D}

The existence of a finite-dimensional version of the conical extension is based on the following observation.

\begin{lemma}\label{lem:total-geodesic}
Suppose that a submanifold $N\subset M$ is a totally geodesic in the manifold  $M$. Then $\cone(N)$ is totally geodesic
in $\cone(M)$.
\end{lemma}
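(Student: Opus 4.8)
The plan is to work directly with the explicit form of the cone metric recalled in Remark~\ref{rem:change_to_cone}, namely $r^2 g(v,v) + dr^2$ on $\cone(M) = M \times \mathbb{R}_+$, and to show that the second fundamental form of $\cone(N)$ inside $\cone(M)$ vanishes. Recall that a submanifold is totally geodesic precisely when its second fundamental form vanishes, equivalently when every geodesic of the ambient manifold that starts tangent to the submanifold remains in it. I would use the second-fundamental-form characterization, since it is a pointwise, linear-algebraic condition that should decouple cleanly along the cone's warped-product structure.

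First I would set up coordinates adapted to the warped product: a tangent vector to $\cone(M)$ at a point $(p,r)$ splits as $(v, a\,\partial_r)$ with $v \in T_p M$ and $a \in \mathbb{R}$, and the tangent space to $\cone(N)$ at $(q,r)$ with $q \in N$ consists of those vectors with $v \in T_q N$. The key structural fact I would invoke is the standard formula for the Levi-Civita connection of a warped product $M \times_r \mathbb{R}_+$ (or, in the opposite convention, $\mathbb{R}_+ \times_f M$), which expresses $\nabla$ on the cone in terms of the connection $\nabla^M$ of the base, together with extra terms involving the warping function $r$ and the radial direction $\partial_r$. These connection formulas are exactly the warped-product identities attributed to O'Neill in \cite{oneilsemi}, which the paper already cites for the Lagrangian geodesic equations, so I may quote them.

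The main computation is then to evaluate the second fundamental form $\mathrm{II}(X,Y) = \big(\nabla_X Y\big)^\perp$ for $X, Y$ tangent to $\cone(N)$ and check it vanishes. I would break this into the three natural cases according to how many of $X, Y$ are radial. When both $X$ and $Y$ are tangent to the base factor (lying in $TN$), the tangential part of $\nabla_X Y$ is $\nabla^M_X Y$, whose normal component is exactly the second fundamental form of $N$ in $M$ and hence vanishes by hypothesis; the only other contribution is a term proportional to $\partial_r$, which is already tangent to $\cone(N)$ and so has no normal part. When one of the vectors is the radial direction $\partial_r$, the warped-product formula gives $\nabla_{\partial_r} Y$ (and $\nabla_Y \partial_r$) as a multiple of $Y$ itself, which is tangent to $\cone(N)$; and $\nabla_{\partial_r}\partial_r = 0$. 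In every case the normal component is controlled by the second fundamental form of $N$ in $M$, which is zero.

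The step I expect to require the most care is verifying that the normal bundle of $\cone(N)$ in $\cone(M)$ is precisely the lift of the normal bundle of $N$ in $M$ (with the radial direction always tangent), and that the warping terms genuinely land in directions already tangent to $\cone(N)$ rather than contributing a spurious normal component. Once the orthogonal splitting $T\cone(M)|_{\cone(N)} = T\cone(N) \oplus (\text{lift of } TN^\perp)$ is pinned down and the connection formulas are in hand, the vanishing of $\mathrm{II}$ follows termwise, and the conclusion that $\cone(N)$ is totally geodesic in $\cone(M)$ is immediate.
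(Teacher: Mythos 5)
Your proof is correct, but it takes a different (though closely related) route from the paper's. The paper argues via the \emph{geodesic} characterization of totally geodesic submanifolds: using the cone geodesic equations derived in Theorem~\ref{thm:geodesics_of_cone}, namely $\nabla_{\dot q}\dot q + \tfrac{2}{\alpha}\dot\alpha\,\dot q = 0$ and $\ddot\alpha = g(\dot q,\dot q)\,\alpha$, it observes that when $N\subset M$ is totally geodesic the covariant derivative $\nabla_{\dot q}\dot q$ of a curve in $N$ stays in $TN$, so a geodesic of $\cone(N)$ with its induced metric satisfies the ambient equations on $\cone(M)$ and hence remains a geodesic there. You instead use the \emph{second fundamental form} characterization, computing $\mathrm{II}$ of $\cone(N)$ in $\cone(M)$ case by case from O'Neill's warped-product connection formulas: for $X,Y$ lifted from $TN$ one has $\nabla_X Y = \nabla^M_X Y - r\,g(X,Y)\,\partial_r$, whose normal part is the (vanishing) second fundamental form of $N$ in $M$ since $\partial_r$ is tangent to $\cone(N)$; the radial cases give $\nabla_{\partial_r}Y = \tfrac{1}{r}Y$ and $\nabla_{\partial_r}\partial_r = 0$, both tangent; and the block-diagonal form of $r^2 g + dr^2$ identifies the normal bundle of $\cone(N)$ with the lift of $TN^\perp$, as you note. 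Both arguments ultimately rest on the same warped-product structure from \cite{oneilsemi}, but they buy different things: the paper's version is essentially a one-line corollary of equations it derives anyway in the Appendix, while yours is a self-contained, pointwise tensorial verification that avoids any discussion of solutions of ODEs, makes the normal-bundle splitting explicit, and is, frankly, more complete than the paper's rather terse sketch.
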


\begin{proof}
To prove the totally geodesic property, one needs to compute the geodesic equations. One can see that if the covariant derivatives
$\nabla_{\dot q} \dot q$ for $q\in N\subset M$ belong to the tangent bundle of $N$ then its extension by the radial variable $r\in \mathbb R_+$ can belong to the 
product of the tangent bundle of $N$ and $\mathbb R_+$. 
\end{proof}

\begin{corollary}
Conical extensions $\GL(n)\times \mathbb R_+\subset \DiffRn\times \mathbb R_+$ of the sub-manifolds $\GL(n)\subset \DiffRn$ are totally geodesic for the natural UOT metric.\footnote{The same statement holds for the unbalanced $\dot H^1$ and Fisher-Rao metrics considered below.}
\end{corollary}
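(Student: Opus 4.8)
The plan is to reduce the statement entirely to Lemma~\ref{lem:total-geodesic} together with a single base-level observation. First I would note, via Remark~\ref{rem:change_to_cone} and \eqref{eq:new_metric_conic}, that the natural UOT metric on $\DiffRn\times\mathbb R_+$ is by construction the conical extension $\cone(\DiffRn)$ of the $L^2$ metric $\langle\dot\varphi,\dot\varphi\rangle_\varphi=\int_{\mathbb R^n}\lvert\dot\varphi\rvert^2\mu$ on $\DiffRn$, and that $\GL(n)\times\mathbb R_+=\cone(\GL(n))$. Applying Lemma~\ref{lem:total-geodesic} with ambient manifold $M=\DiffRn$ and submanifold $N=\GL(n)$ then reduces the whole corollary to the single claim that $\GL(n)$ is totally geodesic in $\DiffRn$ for the $L^2$ metric.

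To establish that claim I would exploit the flatness of $\mathbb R^n$. Since each fibre $T_p\mathbb R^n$ is canonically $\mathbb R^n$, a tangent vector at $\varphi$ is just an $\mathbb R^n$-valued map, and the $L^2$ metric is simply the restriction of the fixed inner product of the Hilbert space $L^2(\mathbb R^n,\mathbb R^n;\mu)$; crucially, it is independent of the footpoint $\varphi$. Thus $\DiffRn$ is an open subset of a flat Hilbert space, whose geodesics are the affine segments $t\mapsto\varphi_0+tv$ that remain diffeomorphisms. The linear maps $x\mapsto Ax$, $A\in\mathrm{Mat}_n(\mathbb R)$, form a finite-dimensional linear subspace $V\subset L^2(\mathbb R^n,\mathbb R^n;\mu)$ --- all the relevant integrals being finite thanks to the finite second moments of the Gaussian weight $\mu$ --- and $\GL(n)$ is exactly the open subset of $V$ consisting of invertible $A$. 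Any ambient geodesic tangent to $\GL(n)$ at $\varphi_{A_0}\colon x\mapsto A_0 x$ has initial velocity of the form $x\mapsto Bx\in V$, so it is the curve $x\mapsto(A_0+tB)x$, which stays inside $V$ and, for small $t$, inside the open set $\GL(n)$. Hence the second fundamental form of $\GL(n)$ vanishes, $\GL(n)$ is totally geodesic in $(\DiffRn,L^2)$, and feeding this into Lemma~\ref{lem:total-geodesic} yields the corollary.

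I expect essentially all of the content to sit in the base-level step, namely the recognition that the $L^2$ metric linearizes $\GL(n)$ into a flat, hence totally geodesic, linear subspace; the conical passage is then immediate. For the stated UOT metric this step is genuinely easy because the base geometry is flat, so the corollary is short. The real obstacle lies in the footnote's generalization: for the non-flat unbalanced $\dot H^1$ and Fisher--Rao base metrics the ``linear subspace'' argument no longer applies directly, and one must instead verify the base-level totally-geodesic property by a symmetry argument --- for instance, by exhibiting $\GL(n)$ as (a component of) the fixed-point set of a suitable isometric involution of $\DiffRn$, so that a geodesic issuing tangent to $\GL(n)$ cannot leave it. Only once that base claim is secured in each case does the same two-step scheme (reduce via Lemma~\ref{lem:total-geodesic}, verify the base) close the argument verbatim.
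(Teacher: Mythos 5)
Your proposal is correct and matches the paper's approach: the corollary is presented as an immediate consequence of Lemma~\ref{lem:total-geodesic}, with the base-level fact---that $\GL(n)$ is totally geodesic in $\DiffRn$ for the flat $L^2$ metric---left implicit, and your flatness argument (geodesics of the $L^2$ metric are affine segments, and linear initial data stay linear and invertible for short time) is exactly the standard justification of that fact, cf.\ \cite{modin2016geometry}. Your closing observation that the footnote's $\dot H^1$/Fisher--Rao cases require a separate base-level argument, since flatness is no longer available there, is also accurate.
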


On the base, we now restrict the metric to the space of scaled (or non-normalized)  Gaussian densities $\mathcal N\subset \Vol$.
In the total space, we restrict the metric to the finite-dimensional direct product subgroup $\GL(n)\times \mathbb R_+\subset \DiffR$:
     \begin{equation}\label{eq:GLn_metric1}
        \Gfin[(A, m )]((\dot A,\dot m ),(\dot A,\dot m )) =  m \int_{\mathbb R^n}\|\dot Ax\|^2\eta(x) + \frac{\dot m ^2}{ m },
     \end{equation}
where $\varphi(x) = Ax$ for $A\in\GL(n)$ and $\eta = p(x,\Sigma)\,dx$ is a normal density with covariance matrix $\Sigma$ and zero mean,
    \[p(x,\Sigma) = \frac{1}{\sqrt{(2\pi)^n|\Sigma|}}\exp\left(-\frac12 x^\top\Sigma^{-1} x\right).\]

\begin{remark}\label{rem:unscaled_gaus}
Recall that the isotropic Gaussian is given by
    \[
        \mu(x) = \frac{1}{\sqrt{(2\pi)^n}} \exp\left(-\frac{1}{2}x^\top x \right)\, dx.
    \]
Consider now a group element $(\varphi: x\mapsto Ax, m )$. The action on $\mu$ is 
    \[
         m \, \varphi_*\mu = \sqrt{\frac{ m ^2}{\det(AA^\top)(2\pi)^n}}\exp\left(-\frac{1}{2}x^\top(AA^\top)^{-1}x \right)\, dx
        =:  m \, p(x,\underbrace{AA^\top}_{\Sigma})dx.
    \]
The latter has the natural scaling property:
\[
         m \, p(x,\Sigma) = p\left( \frac{x}{\sqrt[n]{ m }}, \frac{\Sigma}{\sqrt[n]{ m ^2}}\right).
    \]
    Note that one cannot write $ m \, p(x,\Sigma) = p(x,\tilde\Sigma)$ for some covariance matrix $\tilde\Sigma$, since $p(\cdot,\Sigma_1) = p(\cdot,\Sigma_2) \iff \Sigma_1 = \Sigma_2$.
\end{remark}

    After identifying the Gaussian densities with their (symmetric positive definite) covariance matrices in $\Sym_+(n)$, the finite-dimensional version of our bundle is 
        \begin{align*}
            \pi\colon \GL(n)\times\mathbb R_+ &\to \Sym_+(n)\times\mathbb R_+ \\
            (A, m ) &\mapsto (A\Sigma A^\top, m )\,,
        \end{align*}
    where we parametrize the base using both the covariance matrix and the total volume. 
    The metric \eqref{eq:GLn_metric1} on $\mathrm{GL}(n)\times \mathbb{R}_+$ in terms of $(A,V)\in T_A\GL(n)$, where $V= \dot A A^{-1}$, and $( m ,\xi)\in T_ m \mathbb R_+$ is given by
    \begin{align*}
        \Gfin[(A, m )]((\dot A,\dot m ),(\dot A,\dot m )) &= 
         m  \left( \int_{\mathbb{R}^n} |V x|^2 p(x, \Sigma)dx + \xi^2\right) \\ &=  m  \left(\tr(\Sigma V^\top V) + \xi^2 \right).
    \end{align*}

    \begin{lemma}
        The vertical and horizontal distributions of $\GL(n)\times\mathbb R_+$ with the metric $\Gfin$ are given by:
            \begin{align*}
                \mathcal V_{(A, m )} &= \{(VA,0) \in T_A\GL(n)\times\mathbb R \mid 0 = V(A\Sigma A^\top) + (A\Sigma A^\top)V^\top\}, \\[0.5em]
                \mathcal H_{(A, m )} &= \{(VA,\xi m ) \in T_A\GL(n)\times\mathbb R \mid V\in\Sym(n),\quad\xi\in\mathbb R\}.
            \end{align*}
    \end{lemma}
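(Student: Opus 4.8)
The plan is to obtain $\mathcal V_{(A,m)}$ directly as $\ker d\pi$ and then $\mathcal H_{(A,m)}$ as its $\Gfin$-orthogonal complement. I write $P = A\Sigma A^\intercal$ for the covariance of the base point $\pi(A,m)=(P,m)$, and right-trivialize $T_A\GL(n)$ by $V=\dot A A^{-1}$, so that a tangent vector reads $(VA,\dot m)$ with $\dot m=\xi m$. Differentiating $\pi(A(t),m(t))=(A(t)\Sigma A(t)^\intercal, m(t))$ along such a curve gives $d\pi_{(A,m)}(VA,\xi m)=(VP+PV^\intercal,\ \xi m)$. A vector is vertical exactly when both components vanish: the second forces $\xi=0$ (hence $\dot m=0$), and the first is the stated relation $V(A\Sigma A^\intercal)+(A\Sigma A^\intercal)V^\intercal=0$. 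This yields the asserted formula for $\mathcal V_{(A,m)}$.

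For the horizontal space I would first rewrite the metric in this frame. The change of variables $y=Ax$ in $\int\|\dot A x\|^2 p(x,\Sigma)\,dx$ converts the reference covariance into the current covariance $P$, so that $\Gfin$ restricted to the $\GL(n)$ factor is the bilinear form $(V_1,V_2)\mapsto m\,\tr(P V_1^\intercal V_2)$, while the $\mathbb R_+$ factor contributes $m\,\xi_1\xi_2$ with no cross term. Because every vertical vector has vanishing $m$-component, orthogonality to $\mathcal V$ places no constraint on $\xi$; the scaling direction $\xi\in\mathbb R$ is therefore automatically horizontal, accounting for the $\xi m$ slot.

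It remains to pin down the horizontal matrices. The clean move is to set $W=VP$: the vertical relation $VP+PV^\intercal=0$ says precisely $W+W^\intercal=0$, so $\mathcal V^{\GL}=\{W P^{-1}\mid W\in\mathfrak{so}(n)\}$. For a candidate $V_h$, cyclicity of the trace then gives $\tr\!\big(P V_h^\intercal\, W P^{-1}\big)=\tr(V_h^\intercal W)$, which vanishes for all antisymmetric $W$ if and only if $V_h\in\Sym(n)$. Hence the orthogonal complement of $\mathcal V^{\GL}$ is exactly $\Sym(n)$, giving $\mathcal H_{(A,m)}=\{(V_h A,\xi m)\mid V_h\in\Sym(n),\ \xi\in\mathbb R\}$; as a consistency check, $\dim\mathfrak{so}(n)+\dim\Sym(n)=\tfrac{n(n-1)}2+\tfrac{n(n+1)}2=n^2=\dim\GL(n)$.

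The one delicate point — which I would flag as the crux — is the matching of the two appearances of the covariance: the metric must be expressed with the \emph{current} covariance $P=A\Sigma A^\intercal$ rather than the fixed reference $\Sigma$, since this is the same $P$ that enters the vertical Lyapunov relation. Once both are written with $P$, the weight cancels under the trace and the problem collapses to the elementary fact that $\Sym(n)$ is the Frobenius-orthogonal complement of $\mathfrak{so}(n)$.
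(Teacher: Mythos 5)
Your proposal is correct and follows essentially the same route as the paper: the vertical space is obtained as $\ker d\pi$ by differentiating $(A,m)\mapsto(A\Sigma A^\intercal,m)$, and the horizontal space as its $\Gfin$-orthogonal complement, with the pairing reduced by cyclic trace identities to the statement that $\Sym(n)$ is the trace-orthogonal complement of the antisymmetric matrices (the paper tests against elementary antisymmetric matrices, which is the same fact). Your rewriting of the metric as $m\,\tr(PV_1^\intercal V_2)$ with the current covariance $P=A\Sigma A^\intercal$ is a tidy bookkeeping choice (and indeed the correct reading of the paper's in-text formula $m\,\tr(\Sigma V^\intercal V)$), but it does not change the substance of the argument.
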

    \begin{proof}
        If $(A(t), m (t))$ is a path in $\GL(n)\times\mathbb R_+$ with $(A(0), m (0)) = (A, m )$ and $(\dot A(0),\dot m (0))=(VA,\xi m )$, then:
            \begin{align*}
                d\pi_{(a, m )}(VA,\xi m )
                &= \left.\frac{d}{dt}\right|_{t=0}(A(t)\Sigma A(t)^\top, m (t)) \\
                &= (\dot A(0)\Sigma A(0)^\top + A(0)\Sigma\dot A(0)^\top,\dot m (0)) \\
                &= (VA\Sigma A^\top + A\Sigma A^\top V^\top,\xi m ),
            \end{align*}
        which gives the desired vertical distribution as its kernel. Noting that $\mathcal V_{(A, m )}$ consists of $VA$ such that $VA\Sigma A^\top$ is antisymmetric, if $WA\in\mathcal H_{(A, m )}$ then for all such $Z = VA$ we have:
            \[0 = \Gfin[(A, m )]((W,\xi m ),(Z,0)) =  m \tr(WA\Sigma Z^\top) = - m \tr\left(W(Z\Sigma A^\top)\right).\]
        Picking $Z\Sigma A^\top$ to be the elementary antisymmetric matrix with $1$ in the $(i,j)$-entry and $-1$ in the $(j,i)$-entry (for $i\neq j$) gives that $W$ must be symmetric, giving the desired horizontal distribution.
    \end{proof}

    The projection $\pi\colon\GL(n)\times\mathbb R_+\to \Sym_+(n)\times\mathbb R_+$ subduces a metric $\bGfin$ on $\Sym_+(n)\times\mathbb R_+$ by defining
        \[\bGfin[\pi(A, m )](d\pi_{(A, m )}(X,a),d\pi_{(A, m )}(Y,b)) = \Gfin[(A, m )]((X,a)_{\mathcal H},(Y,b)_{\mathcal H}),\]
    where the subscript $\cdot_\mathcal{H}$ denotes the horizontal part of the vector. 
    This metric makes $\pi$ into a Riemannian submersion. 
    Explicitly, 
        \[\bGfin[(V, m )]((X,\xi m ),(X,\xi m )) =  m (\tr(VSS) + \xi^2),\]
    where $S$ is a symmetric $n\times n$ matrix that is a solution to the continuous Lyapunov equation given by $X = SV + VS$. 
    The finite-dimensional metric so constructed is simply the cone metric built from the ``balanced'' case described in \cite{modin2016geometry}.


    \bigskip
    
    Let us now compute the Legendre transform.
    The dual variable $P$ to $\dot V = X$ is given by
        \begin{align*}
            \langle P,\delta\dot V \rangle &= \frac{d}{d\epsilon} \frac{1}{2}\bGfin[(V,m)]((\dot V_\epsilon,\dot m),(\dot V_\epsilon,\dot m))
        = \frac{m}{2} \tr(\Sigma (\delta S\,S + S\,\delta S))\\
        &= \frac{m}{2} \tr((V \delta S  + \delta SV) S) = \frac{m}{2} \tr(S \delta\dot V),
        \end{align*}
    where for $\delta S$ we have
    \[
        \delta\dot V = \delta SV + V\,\delta S .
    \]
    Thus, the dual variable is $P = mS/2$.
    The dual variable for $m$ is $\xi$.
    This gives the Hamiltonian
    \[
        H(V, m ,P,\xi) = \frac{\tr(VPP)}{2m} + \frac{1}{2} m\xi^2 .
    \]
    This gives the Hamiltonian form of the geodesic equations on $T^*(\Sym_+(n)\times \mathbb{R}_+)$ as
    \begin{align*}
        &\dot V = \frac{2}{ m }\left(PV + VP\right), & &\dot m  = \xi m  \\
        &\dot P = -\frac{P^2}{2 m }, & &\dot\xi = \frac{1}{2}\left(\frac{\operatorname{tr}(vP^2)}{ m ^2} - \xi^2 \right)
    \end{align*}

    \begin{remark}
        Note that here the need for all four equations, as opposed to Theorem \ref{thm:ham_geodesic} where we only have two equations,  arises from the observation in Remark \ref{rem:unscaled_gaus} that we need two parameters to describe the unscaled Gaussian distributions.
    \end{remark}

            
\subsection{Affine transformations and Gaussians with nonzero means}

It turns out that considering the group of affine transformations $\GL(n)\ltimes \mathbb R^n\subset  \DiffRn $  acting on Gaussians with arbitrary (not necessarily zero) means does not essentially change the above picture. While the group extension is semi-direct, its metric extension is a direct product, provided that a reference Gaussian is $\eta = p(x,\Sigma)\,dx$ with mean $\mu = 0$.

\begin{remark}
    For a more general reference Gaussian measure the metric   accumulates the following terms:
    \begin{multline*}
        \Gaff[(A,b, m )]((\dot A,\dot b,\dot m ),(\dot A,\dot b,\dot m )) = \\  m \left[\tr(\dot A\Sigma\dot A^\top) + \|\dot A\mu\|^2 + 2\langle\dot A\dot b,\mu\rangle + \|\dot b\|^2\right] + \frac{\dot m ^2}{ m },
    \end{multline*}
    and it descends to the metric $\bGaff$ on $(\GL(n)\ltimes\mathbb R^n)\times\mathbb R_+$ given by:
    \begin{multline*}
	    \bGaff[(U,v, m )]((X,y,a),(W,z,b))= \\
			  m \left[\tr(XU^{1/2}\Sigma U^{1/2}W^\top) + \langle XU^{1/2}\Sigma^{1/2}\mu,WU^{1/2}\Sigma^{1/2}\mu\rangle\right. \\
		\left.+ \langle XU^{1/2}\Sigma^{1/2}z,\mu\rangle + \langle WU^{1/2}\Sigma^{1/2}y,\mu\rangle + \langle y,z\rangle\right] + \frac{ab}{ m }.
    \end{multline*}
    Note that if $\mu = 0$, then several terms vanish, and one is left with the product metric of $(\Sym_+(n)\times\mathbb R^n)\times\mathbb R_+$.    
\end{remark}

This implies that the geodesics between two Gaussian densities with different means are the pushforwards of measures by affine transformations, which decompose into the uniform motion between the centers of the two Gaussian densities and the  $\GL(n)$ transformation  with the fixed center. 

\medskip

\begin{remark}
    The explicit geodesics for $\Sym_+(n)$ with the Wasserstein metric are given in McCann \cite{mccann1997convexity}. 
    In particular, for $U,V\in \Sym_+(n)$, define
        \[T = U^{1/2}(U^{1/2}VU^{1/2})^{-1/2}U^{1/2}\in \Sym_+(n),\]
    and then $W(t) = [(1-t)E + tT]V[(1-t)E + tT]$ is a geodesic between $U$ and~$V$. In our case, if the reference measure is of mean zero ($\mu=0$) the geodesics in the \emph{balanced} affine extension are those of the product $\Sym_+(n)\times\mathbb R^n$. The geodesics in the unbalanced case are those of the conical extension  $\Sym_+(n)\times\mathbb R^n\times \mathbb R_+$.

    The sectional curvatures of $\Sym_+(n)$ with the Wasserstein metric are well understood (see \cite{asuka}) and are known to be non-negative. 
    Hence, in the case  $\mu = 0$ the affine and conical extensions also have non-negative sectional curvatures.
\end{remark}



\section{A ``large" extension for UOT}

\subsection{The ``large" group, metric, and the geodesic equations}
A more ``classical'' approach to unbalanced optimal transport involves the following large semi-direct extension of the group $\Diff$ of all diffeomorphisms of a manifold by means of the space of smooth functions, see, e.g.,
 \cite[\S 3.2.2]{vialard2017diffeomorphisms}.
Namely, the semi-direct product $G=\Diff\ltimes C^\infty_+(M)$ acts on $\Vol$ by $(\varphi, \lambda )\cdot \varrho = \varphi_*(\lambda\varrho)$, i.e.,  diffeomorphisms  act on densities by changes of coordinates, while functions adjust the obtained density point-wise. 
Let $\mu\in\Vol$ denote the reference volume form.
Then we get a projection $\Pi\colon G\to \Vol$ by the action on $\mu$.

\begin{lemma}
    The vertical bundle is given by
    \begin{equation*}
        \mathcal V_{(\varphi,\lambda)} = \{ (v\circ\varphi,\frac{\varphi^*(L_v \varrho)}{\mu}) \mid v\in \mathfrak{X}(M)\} \simeq \mathfrak{X}(M),
    \end{equation*}
    where $\varrho = \varphi_*(\lambda\mu)$.
\end{lemma}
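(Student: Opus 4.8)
The plan is to compute the differential $d\Pi_{(\varphi,\lambda)}$ directly from a curve and read off its kernel, in exact parallel with the computation \eqref{eq:bundle_projection_derivative} for the small extension. Let $(\varphi(t),\lambda(t))$ be a curve in $G$ with $(\varphi(0),\lambda(0))=(\varphi,\lambda)$ and tangent vector $(\dot\varphi(0),\dot\lambda(0))=(v\circ\varphi,\dot\lambda)$, where $v=\dot\varphi(0)\circ\varphi^{-1}\in\mathfrak X(M)$ is the Eulerian velocity and $\dot\lambda\in C^\infty(M)$ is an arbitrary function (the tangent space to the open subset $C^\infty_+(M)$ at $\lambda$ is all of $C^\infty(M)$). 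Then
\[
    d\Pi_{(\varphi,\lambda)}(v\circ\varphi,\dot\lambda)=\left.\frac{d}{dt}\right|_{t=0}\varphi(t)_*\big(\lambda(t)\mu\big),
\]
and by the product rule this differential splits into a term in which only $\varphi$ varies and a term in which only $\lambda$ varies.

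For the first term I would use the same Lie-derivative convention $v=\dot\varphi\circ\varphi^{-1}$ as in \eqref{eq:bundle_projection_derivative}: writing $\varphi(t)=\phi_t\circ\varphi$ with $\phi_0=\mathrm{id}$ and $\dot\phi_0=v$, one has $\varphi(t)_*(\lambda\mu)=(\phi_t)_*\varrho$, whence $\left.\frac{d}{dt}\right|_{t=0}\varphi(t)_*(\lambda\mu)=-L_v\varrho$. The second term is linear in $\lambda$, contributing $\varphi_*(\dot\lambda\,\mu)$. Therefore
\[
    d\Pi_{(\varphi,\lambda)}(v\circ\varphi,\dot\lambda)=-L_v\varrho+\varphi_*(\dot\lambda\,\mu).
\]

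The vertical space is the kernel of this map, so $(v\circ\varphi,\dot\lambda)\in\mathcal V_{(\varphi,\lambda)}$ if and only if $\varphi_*(\dot\lambda\,\mu)=L_v\varrho$. Pulling back by $\varphi$ and dividing by the nonvanishing volume form $\mu$ (noting that $\varphi^*(L_v\varrho)$ is again a top-form, so the ratio is a well-defined smooth function), this is equivalent to $\dot\lambda=\varphi^*(L_v\varrho)/\mu$, which is the asserted form. Thus every vertical vector is determined by a free choice of $v\in\mathfrak X(M)$, with $\dot\lambda$ then fixed. Since the first component $v\circ\varphi$ recovers $v$, the linear map $v\mapsto(v\circ\varphi,\varphi^*(L_v\varrho)/\mu)$ is a bijection onto $\mathcal V_{(\varphi,\lambda)}$, giving the identification $\mathcal V_{(\varphi,\lambda)}\simeq\mathfrak X(M)$.

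The one point requiring genuine care is the sign in the pushforward derivative $\left.\frac{d}{dt}\right|_{t=0}\varphi(t)_*(\lambda\mu)=-L_v\varrho$: one must confirm the orientation convention $v=\dot\varphi\circ\varphi^{-1}$ used earlier, since the opposite choice would flip the sign of $\dot\lambda$. Everything else is a routine application of the product rule together with the fact that $C^\infty_+(M)$ is open, so that its tangent spaces are all of $C^\infty(M)$ and $\dot\lambda$ ranges freely.
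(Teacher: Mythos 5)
Your proof is correct and in substance the same as the paper's: both arguments differentiate a curve through $(\varphi,\lambda)$ and use the same Lie-derivative identities to conclude that a vertical vector is exactly one of the form $(v\circ\varphi,\varphi^*(L_v\varrho)/\mu)$ with $v\in\mathfrak{X}(M)$ free. The only organizational difference is that the paper first solves the fiber constraint $\varphi(t)_*(\lambda(t)\mu)=\varrho$ for $\lambda(t)=\varphi(t)^*\varrho/\mu$ and then differentiates --- which produces only the pullback derivative $\varphi^*(L_v\varrho)$ and so sidesteps the sign convention you rightly flag as the delicate point --- whereas you compute the full differential $d\Pi_{(\varphi,\lambda)}(v\circ\varphi,\dot\lambda)=-L_v\varrho+\varphi_*(\dot\lambda\,\mu)$ and then take its kernel.
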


\begin{proof}
    A curve $(\varphi(t),\lambda(t))$ belongs to the fiber of $\varrho\in\Vol$ iff  $\varphi(t)_*(\lambda(t)\mu) = \varrho$ for all $t$.
    Equivalently,
    \[
        \lambda(t) = \frac{\varphi(t)^*\varrho}{\mu}.
    \]
By differentiating this relation  we get the result.
\end{proof}

\begin{remark}
    This description of $\mathcal V$ is equivalent to the one given by Vialard~\cite{vialard2017diffeomorphisms} as
    \[\ker d\pi(\varphi,\sqrt{\text{Jac}\,\varphi}) = \left\lbrace\left.\left(v,\frac{\Div v}{2}\right)\circ(\varphi,\sqrt{\text{Jac}\,\varphi})\,\right|\, v\in\mathfrak X(M)\right\rbrace\]
    The relation is $\varrho = \operatorname{Jac}(\varphi) \mu$, where the square root appears if one passes from volume forms to \emph{half-densities}, i.e., geometric objects that transform as the square root of a volume form.
\end{remark}

Consider now the Riemannian metric on $G$ considered in \cite{chizat2018unbalanced, kondratyev2016new, piccoli2016properties, vialard2017diffeomorphisms}
and given by
\begin{equation}\label{eq:semi_direct_metric}
    \Glarge[(\varphi,\lambda)]((\dot\varphi,\dot\lambda), (\dot\varphi,\dot\lambda)) = \int_M \lvert \dot\varphi\rvert^2 \lambda \mu + \int_M \frac{\dot\lambda^2}{\lambda}\mu .
\end{equation}

\begin{lemma}
    The horizontal bundle of the metric $\Glarge$~\eqref{eq:semi_direct_metric} is
    \begin{equation*}
        \mathcal H_{(\varphi,\lambda)} = \{ (\nabla \theta\circ\varphi,\lambda (\theta\circ\varphi))\mid \theta\in C^\infty(M) \} \simeq C^\infty(M).
    \end{equation*}
\end{lemma}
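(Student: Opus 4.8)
The plan is to compute the $\Glarge$-orthogonal complement of the vertical bundle $\mathcal V_{(\varphi,\lambda)}$ directly, since by definition $\mathcal H_{(\varphi,\lambda)} = \mathcal V_{(\varphi,\lambda)}^{\perp}$. I write an arbitrary tangent vector at $(\varphi,\lambda)$ as $(w\circ\varphi, g)$ with $w\in\mathfrak{X}(M)$ and $g\in C^\infty(M)$, and pair it against a generic vertical vector $(v\circ\varphi,\ \varphi^*(L_v\varrho)/\mu)$ supplied by the preceding lemma, where $\varrho=\varphi_*(\lambda\mu)$. The goal is to show this pairing vanishes for every $v$ precisely when $w=\nabla\theta$ and $g=\lambda(\theta\circ\varphi)$ for some $\theta\in C^\infty(M)$.

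First I would simplify the two terms of the metric \eqref{eq:semi_direct_metric} by pushing forward along $\varphi$. For the kinetic term, the change of variables $y=\varphi(x)$ together with $\varphi_*(\lambda\mu)=\varrho$ turns $\int_M \langle w\circ\varphi, v\circ\varphi\rangle\,\lambda\mu$ into $\int_M \langle w,v\rangle\,\varrho$. For the conformal term, setting $h:=(g/\lambda)\circ\varphi^{-1}$ and using $(\varphi^*(L_v\varrho)/\mu)\,\mu=\varphi^*(L_v\varrho)$, the same change of variables turns $\int_M \tfrac{g}{\lambda}\,\varphi^*(L_v\varrho)$ into $\int_M h\,L_v\varrho$. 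Thus the pairing collapses to $\int_M \langle w,v\rangle\,\varrho + \int_M h\,L_v\varrho$.

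The key step is to integrate the second integral by parts. Using Cartan's formula $L_v\varrho=d(\iota_v\varrho)$ (valid since $\varrho$ is top-degree), Stokes' theorem on $M$, and the identity $dh\wedge\iota_v\varrho=\langle\nabla h,v\rangle\,\varrho$ (which follows from $\iota_v(dh\wedge\varrho)=0$), I obtain $\int_M h\,L_v\varrho=-\int_M \langle\nabla h,v\rangle\,\varrho$. Hence the full pairing equals $\int_M \langle w-\nabla h,\,v\rangle\,\varrho$. Since $\varrho>0$ and $v\in\mathfrak{X}(M)$ is arbitrary, orthogonality to all of $\mathcal V_{(\varphi,\lambda)}$ holds exactly when $w=\nabla h$ pointwise. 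Setting $\theta:=h$ gives $w\circ\varphi=\nabla\theta\circ\varphi$ and $g=\lambda(\theta\circ\varphi)$, which is precisely the asserted description; the map $\theta\mapsto(\nabla\theta\circ\varphi,\lambda(\theta\circ\varphi))$ is injective since $\theta$ is recovered from the second slot, yielding the identification $\mathcal H\simeq C^\infty(M)$.

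The main obstacle is bookkeeping rather than conceptual: one must carry the two pushforward changes of variables carefully and justify the integration by parts. On a closed $M$ this produces no boundary terms, while in the noncompact case (e.g. $M=\mathbb{R}^n$ with Gaussian weights) it requires the Sobolev and decay assumptions mentioned after Remark~\ref{rem:change_to_cone} so that $\int_M d(h\,\iota_v\varrho)=0$. A minor point to confirm is that every tangent vector is realized in the form $(w\circ\varphi,g)$, so that the computation characterizes the entire complement; this is immediate, as $\varphi$ is a diffeomorphism and $g$ ranges over all of $C^\infty(M)$.
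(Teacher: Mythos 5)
Your proposal is correct and follows essentially the same route as the paper: you pair a general tangent vector against the vertical vectors $(v\circ\varphi,\varphi^*(L_v\varrho)/\mu)$ from the preceding lemma, push forward along $\varphi$ to rewrite both terms against $\varrho$, and integrate by parts to reduce orthogonality to $\int_M\langle w-\nabla h,v\rangle\,\varrho=0$ for all $v$, forcing $w=\nabla h$. The only cosmetic difference is that the paper parametrizes the second slot as $\lambda(\theta\circ\varphi)$ from the outset, whereas you start from an arbitrary $g$ and recover $\theta=(g/\lambda)\circ\varphi^{-1}$ at the end; you also spell out the Cartan/Stokes justification that the paper leaves implicit.
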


\begin{proof}
    Any element in $T_{(\varphi,\lambda)} G$ can be written $(u\circ \varphi, \lambda (\theta\circ\varphi))$.
    Suppose that $(u\circ\varphi, \lambda(\theta\circ\varphi))\in \mathcal H_{(\varphi,\lambda)}$. 
    Since for all $v\in\mathfrak X(M)$ the pairs $(v\circ\varphi, \varphi^*(L_v\varrho)/\mu)$ span the vertical space $\mathcal V_{(\varphi,\lambda)}$,
    we have that
        \begin{align*}
            0
            &= \Glarge[(\varphi,\lambda)]\left(\left(v\circ\varphi,\frac{\varphi^*(L_v\varrho)}{\mu}\right),(u\circ\varphi,\lambda(\theta\circ\varphi))\right) \\
            &= \int_M\langle v\circ\varphi,u\circ\varphi\rangle\, \lambda\mu + \int_M(\theta\circ\varphi)\varphi^*(L_v\varrho)\\
            &= \int_M\langle v,u\rangle\, \varphi_* (\lambda\mu) + \int_M\theta L_v\varrho \\
            &= \int_M\langle v,u\rangle\, \varrho - \int_M\langle v, \nabla \theta\rangle \varrho \, =\, \int_M\langle v,u - \nabla \theta \rangle\, \varrho  .
        \end{align*}
The latter integral vanishes for any $v\in \mathfrak{X}(M)$ if and only if $u = \nabla\theta$, which concludes the proof.
\end{proof}

\begin{theorem}[cf.~\cite{vialard2017diffeomorphisms}]\label{thm:uot_geom}
    The metric $\Glarge$ given by \eqref{eq:semi_direct_metric}
    projects as a Riemannian submersion to the metric $\bGlarge$ on $\Vol$ given by
    \begin{equation*}
        \bGlarge[\varrho](\dot\varrho,\dot\varrho) = \frac{1}{2}\int_M \left( \lvert \nabla\theta\rvert^2 + \theta^2 \right)\varrho , \qquad \dot\rho = -\operatorname{div}(\rho\nabla\theta) + \rho\theta.
    \end{equation*}
    The variable $\theta\in C^\infty(M)$ is Legendre-dual to $\dot\varrho$, i.e., the Hamiltonian corresponding to the metric is
    \begin{equation}\label{eq:WFR-hamilt}
        H(\varrho,\theta) = \frac{1}{2}\int_M \left( \lvert \nabla\theta\rvert^2 + \theta^2 \right)\varrho .
    \end{equation}
    The equations of geodesics (in Hamiltonian form) are
    \begin{align*}
        &\dot\rho = -\operatorname{div}(\rho\nabla\theta) + \rho\theta \\
        &\dot\theta = -\frac{1}{2} \lvert \nabla\theta\rvert^2 - \frac{\theta^2}{2} .
    \end{align*}
\end{theorem}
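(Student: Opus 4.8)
The plan is to follow the same three-step template as in the proof of Theorem~\ref{thm:new_metric_induced}, now applied to the semidirect extension $G=\Diff\ltimes C^\infty_+(M)$. First I would establish that $\Glarge$ descends to a Riemannian submersion; second, differentiate the projection $\Pi$ along a horizontal curve to read off the constraint relating $\dot\varrho$ and $\theta$; third, verify the Legendre duality, after which the Hamiltonian and the two geodesic equations drop out by a direct variational computation. The vertical and horizontal bundles needed for all of this have already been identified in the two preceding lemmas, so the work is purely in assembling them.

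For the submersion step I would first check that $\Glarge$ in \eqref{eq:semi_direct_metric} is invariant under the right action of the isotropy subgroup $\{(\varphi,\lambda)\mid\varphi_*(\lambda\mu)=\mu\}$, exactly as in the direct-product case; this guarantees that the metric is compatible with the principal bundle structure and hence induces a metric $\bGlarge$ on $\Vol$. I would then substitute the horizontal vector $(\nabla\theta\circ\varphi,\lambda(\theta\circ\varphi))$ from the horizontal-bundle lemma into \eqref{eq:semi_direct_metric} and push forward by $\varphi$ using $\varrho=\varphi_*(\lambda\mu)$: the kinetic term yields $\int_M|\nabla\theta|^2\varrho$ (the Wasserstein part) and the conformal term $\int_M\tfrac{(\lambda(\theta\circ\varphi))^2}{\lambda}\mu$ collapses to $\int_M\theta^2\varrho$ (the Fisher-Rao part), assembling into the stated form of $\bGlarge$.

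To obtain the constraint $\dot\rho=-\Div(\rho\nabla\theta)+\rho\theta$ I would differentiate $\varrho(t)=\varphi(t)_*(\lambda(t)\mu)$ along a horizontal curve, in direct analogy with \eqref{eq:bundle_projection_derivative}. Varying the diffeomorphism factor with $\lambda$ held fixed contributes $-L_{\nabla\theta}\varrho=-\Div(\rho\nabla\theta)\mu$, while varying the conformal factor with $\varphi$ held fixed contributes $\varphi_*(\lambda(\theta\circ\varphi)\mu)=\theta\varrho$; adding the two gives the claimed continuity-type equation, now with the zeroth-order reaction term $\rho\theta$ replacing the spatially constant $\xi\rho$ of the small extension.

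Finally, for the Legendre duality I would set $L=\tfrac12\bGlarge[\varrho](\dot\varrho,\dot\varrho)$, perturb $\dot\varrho$ with $\varrho$ held fixed, and observe that such a perturbation is realized by a variation $\theta'$ of $\theta$ through $\delta\dot\rho=-\Div(\rho\nabla\theta')+\rho\theta'$. Integrating the Dirichlet term by parts collapses $\tfrac{d}{d\epsilon}L$ to $\int_M\theta\,\delta\dot\varrho$, which identifies $\theta=\delta L/\delta\dot\varrho$; the Hamiltonian $H=\langle\theta,\dot\varrho\rangle-L$ then evaluates to \eqref{eq:WFR-hamilt}, and Hamilton's equations $\dot\varrho=\delta H/\delta\theta$ and $\dot\theta=-\delta H/\delta\varrho$ reproduce the two geodesic equations (the second from $\delta H/\delta\varrho=\tfrac12(|\nabla\theta|^2+\theta^2)$). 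I expect the Legendre step to be the main obstacle, since it requires that the map $\theta\mapsto-\Div(\rho\nabla\theta)+\rho\theta$ be invertible so that the induced variation $\theta'$ is well defined and the pairing bookkeeping is unambiguous. Here this is in fact \emph{cleaner} than in the direct-product case: the Fisher-Rao term $+\rho\theta$ makes the elliptic operator strictly positive, hence invertible on all of $C^\infty(M)$, which removes the constant-mode degeneracy that forced the auxiliary variable $\xi$ in Theorem~\ref{thm:new_metric_induced}.
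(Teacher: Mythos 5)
Your proposal is correct and follows essentially the same route as the paper: the paper's proof consists of exactly your final step (Hamilton's equations via the variations $\theta_\epsilon$ and $\varrho_\epsilon$), with the submersion and Legendre-duality steps compressed into ``follows similarly to Theorem~\ref{thm:Hameqs_cone}'' and the two preceding bundle lemmas, which you simply spell out explicitly. One minor remark: your horizontal substitution yields $\int_M(\lvert\nabla\theta\rvert^2+\theta^2)\varrho$ with no prefactor, so the factor $\tfrac12$ in the paper's displayed $\bGlarge$ is a convention inconsistency inherited from the paper rather than an error in your computation.
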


\begin{proof}
    The proof of this follows similarly to Theorem \ref{thm:Hameqs_cone}. Hamilton's equations are $\dot\varrho = \delta H/\delta \theta$ and $\dot\theta = -\delta H/\delta \varrho$. Given a variation $\theta_\epsilon = \theta + \epsilon\,\delta\theta$, note:
        \begin{align*}
            \left.\frac{d}{d\epsilon}\right|_{\epsilon=0} H(\varrho,\theta_\epsilon)
            &= \int_M \rho(\nabla\theta\cdot\nabla(\delta\theta) + \theta\,\delta\theta)\,\mu \\
            &= \int_M \Big(-\Div(\rho\nabla\theta) + \rho\theta\Big)\delta\theta\,\mu,
        \end{align*}
    and so $\dot\rho = -\Div(\rho\nabla\theta) + \rho\theta$.

    Similarly, considering a variation $\varrho_\epsilon = \varrho + \epsilon\,\delta\varrho$, we see:
        \[
            \left.\frac{d}{d\epsilon}\right|_{\epsilon=0} H(\varrho_\epsilon,\theta)
            = \int_M\frac12\Big(|\nabla\theta|^2 + \theta^2\Big)\,\delta\varrho,
        \]
    and so we immediately get $\dot\theta = - |\nabla\theta|^2/2 - {\theta^2}/{2}$.
 \end{proof}

\begin{remark} 
The metric $\bGlarge$ in Theorem~\ref{thm:uot_geom} can be interpreted as an interpolation between Wasserstein--Otto and Fisher--Rao, but not a convex combination of the Riemannian metric tensors (see Remark~\ref{rem:lagrange_vs_hamiltonian} below).
One way of understanding the relation is the following: the Wasserstein--Otto part of the metric depends on the finite-dimensional metric $g$ on $M$, but the second term does not. Thus, let us introduce a parameter $\beta$ by making the replacement $g\mapsto \beta g$. 
Then, as $\beta\to \infty$, we recover the Fisher--Rao metric (indeed, in the Hamiltonian \eqref{eq:WFR-hamilt} the term with $\nabla \theta\to 0$ for the metric $\beta g$ and only the second term remains, which corresponds to the Fisher--Rao metric).
On the other hand, as $\beta\to 0$ we recover the (scaled) Wasserstein--Otto metric (represented by the first term in the Hamiltonian).
Thus, this mixed metric behaves as Fisher--Rao on small scales, but as Wasserstein--Otto on large scales.
\end{remark}

\begin{remark}
    The metric in Theorem~\ref{thm:uot_geom} lifted to a  metric on $\Diff\times\mathbb{R}_+$ is given by
    \begin{equation*}
        \langle (v\circ\varphi,\dot\lambda), (v\circ\varphi,\dot\lambda)\rangle_{(\varphi,\lambda)} =  \int_M \left( \lvert \nabla\theta\rvert^2 + \theta^2 \right)\varrho ,
    \end{equation*}
    where $\varrho = \lambda\varphi_*\mu$ and $\theta \in C^\infty(M)$ is the solution to the equation
    \begin{equation*}
        -\operatorname{div}(\rho\nabla\theta) + \rho\theta = -\operatorname{div}(\rho v) + \frac{\rho\dot\lambda}{\lambda} .
    \end{equation*}
    Notice, however, that $\theta$ is somewhat difficult to find, as it requires the solution of a non-local equation.
    The ``small'' extension discussed above does not encounter this difficulty. 
This shows that {\it the metric of the simple UOT is not a restriction} of the noticeably more complicated  metric in Theorem~\ref{thm:uot_geom}.
\end{remark}

\subsection{Interrelation between the small and large extensions}
We discussed above, in Remark \ref{rem:production}, that in the simple UOT one can introduce a localized ``production function'' $f \geq 0$, which leads to the evolution of density $\rho$ given by 
    \[
        \dot\rho = -\operatorname{div}(\rho\nabla\theta) + \xi f\rho  .
    \]
    Furthermore, one can consider a model with several production functions $f_1, \dots, f_k$, each with an independent coefficient, which are optimised together. This way one can view this adjusted UOT as an approximation of the unbalanced transport corresponding to the large extension and the metric  $\bGlarge$ on $\Vol$, cf. Theorem \ref{thm:uot_geom} and see \cite{vialard2017diffeomorphisms}.

    Indeed, for several production functions the  Hamiltonian \eqref{eq:ham_new_metric_reserve} becomes 
    \[ 
        H(\varrho,\theta) = \frac{1}{2}\int_M \lvert \nabla\theta\rvert^2\varrho +  \sum_{i=1}^k\frac{1}{2 m }\left( \int_M \theta f_i \varrho \right)^2 ,
    \] 
    where $m$ is the total mass and $\xi_i m = \int_M \theta f_i \varrho$. Then the 
    evolution of $\rho$ is given by 
    \[
        \dot\rho = -\operatorname{div}(\rho\nabla\theta) + \rho \sum_{i=1}^k \xi_i f_i \,
    \]
where the last term $\rho\sum_1^k \xi_i f_i$ can be regarded as a finite-dimensional replacement (``approximation") of the term $\rho\theta$ with a function $\theta\in C^\infty(M)$ in the evolution of density 
$$\dot\rho = -\operatorname{div}(\rho\nabla\theta) + \rho\theta $$ given in Theorem \ref{thm:uot_geom}.
Thus, as the number $k$ of fixed production functions goes to infinity, one recovers the problem of optimal transport with variable production of density over $M$. Such an approximation can be useful for numerical modeling.





\subsection{A finite-dimensional version of the large extension}

Consider the finite-dimensional group
$\GL(n)\ltimes \Sym(n)\subset  \DiffRn \ltimes C^\infty_+(\mathbb R^n)$,
where $\Sym(n)$ is the additive space of symmetric $n\times n$ matrices (or equivalently, the corresponding quadratic forms on $\mathbb R^n$), on which linear transformations act by the variable change. 
\medskip

 We regard $\Sym(n)$ as a subset of $C^\infty_+(\mathbb R^n)$ by using the map
    \[E = \{x\mapsto\exp(x^\top Sx) \mid S\in\Sym(n)\}\subset C^\infty_+(\mathbb R^n)\,.\]
This way the addition group of symmetric matrices $\Sym(n)$ becomes a multiplication subgroup of
positive functions $C^\infty_+(\mathbb R^n)$.
An advantage of this approach is that the Riemannian submersion can be restricted to the finite-dimensional
model, where the group $\GL(n)\ltimes E$ acts on $E\subset \Vol$. Here an element $(A,S)\in \GL(n)\ltimes E$
acts naturally on a density
$p(x,\Sigma)$ by 
changing variables and bringing the quadratic into the exponential:
      \begin{align*}
      (A, S): ~~ p(x,\Sigma)\mapsto \, \widetilde p(x,\Sigma):=&\exp(x^\top S x)\, p(x,A^\top\Sigma A)\\
     =& \frac{1}{\sqrt{(2\pi)^n|\Sigma|}}\exp \left(-\frac12 x^\top((A^\top\Sigma A)^{-1}  - 2S)x\right).
    \end{align*}
The drawback is that even if $p(x,\Sigma)$ is a Gaussian density and $S$ is also positive-definite,
 the symmetric matrix $(A^\top\Sigma A)^{-1} - 2S$ might not be positive-definite! 
 This means that the total volume of the density
$\widetilde p(x, \Sigma )=\break \exp(x^\top S x)\, p(x,A^\top\Sigma A) $ in $\mathbb R^n$  might be infinite.
Thus one has to consider a restricted orbit of this action, constrained by the condition of positivity of the
matrix $(A^\top\Sigma A)^{-1}  - 2S$. 

Note that the required positivity is automatically satisfied and does not constrain anything 
in the infinite-dimensional setting of the space of $L^2$ densities $\Vol$. Also this constraint is not required in the finite-dimensional simple 1D conical extension described in Section \ref{sect:finite1D}.


\section{Other versions of the unbalanced transport metric}

\subsection{A ``small'' extension with a divergence term}
Consider now the Riemannian metric on $\DiffR$ given by 
\begin{equation}\label{eq:trueWFR}
    \Gdiv[(\varphi,\lambda)]\left( (v\circ\varphi,\xi\lambda), (v\circ\varphi,\xi\lambda) \right)
    = \frac{1}{2}\int_M \left( \lvert v\rvert^2 + \frac{\operatorname{div}(\rho v)^2}{\rho^2} \right)\varrho + \lambda\xi^2.
\end{equation}

Notice the similarity of  $\Gdiv$ with $\G$ given by \eqref{eq:new_metric_direct_product}: it is the same metric supplemented by the divergence term. In particular, on vertical vectors it is exactly the same metric.
Thus, the horizontal bundle is the same as in Lemma~\ref{lem:hor_true_WFR}.

Consider now the metric $\bGdiv$ on $\Vol$ given by
\[
    \bGdiv[\varrho](\dot\varrho,\dot\varrho) = \int_M \left(  \lvert \nabla S\rvert \varrho + \left(\frac{\dot\varrho}{\varrho}\right)^2\varrho \right) ,\quad -\operatorname{div}(\rho\nabla S) = \dot\rho - \kappa\rho \; .
\]
Here, we think of $\kappa$ as a Lagrange multiplier to ensure that the average of the right-hand side vanishes.

\begin{theorem}
    The projection $\pi\colon \DiffR \to \Vol$ given by $\pi(\varphi,\lambda) = \lambda\varphi_*\mu$ is a Riemannian submersion with respect to $\Gdiv$ and $\bGdiv$.
\end{theorem}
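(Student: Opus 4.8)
The plan is to verify the two defining properties of a Riemannian submersion: that $\Gdiv$ is bundle-like (invariant along the fibres of $\pi$, so that it descends to $\Vol$), and that $d\pi$ restricts to a linear isometry from the horizontal distribution onto $T\Vol$ equipped with $\bGdiv$. Since the fibres of $\pi(\varphi,\lambda)=\lambda\varphi_*\mu$ are precisely the orbits of the right action of $G=\SDiff\times\{1\}$ (the same structure group as for $\G$), and since the vertical space $\mathcal V=\ker d\pi$ is metric-independent, the vertical distribution is the one already computed. Moreover $\Gdiv$ and $\G$ agree on $\mathcal V$, because the extra term $\Div(\rho v)^2/\rho^2$ vanishes whenever $v$ is $\rho$-divergence-free; this lets me reuse the vertical and horizontal descriptions established earlier.

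First I would check invariance. Writing the metric in the Eulerian variables $v=\dot\varphi\circ\varphi^{-1}$, $\xi=\dot\lambda/\lambda$, $\rho=\varrho/\mu$, and $\varrho=\lambda\varphi_*\mu$, right translation by $(\psi,1)$ with $\psi\in\SDiff$ sends $(\varphi,\lambda)\mapsto(\varphi\circ\psi,\lambda)$ and leaves each of $v$, $\xi$, $\rho$, $\varrho$, $\lambda$ unchanged (the Eulerian velocity is insensitive to right reparametrisation, and $\psi_*\mu=\mu$). Hence $\Gdiv$ is constant along fibres and descends to a well-defined quadratic form on $\Vol$. Next I would confirm that the $\Gdiv$-orthogonal complement of $\mathcal V$ is the gradient distribution of Lemma~\ref{lem:hor_true_WFR}: for a vertical $(v\circ\varphi,0)$ with $\Div(\rho v)=0$ and a candidate $(\nabla\theta\circ\varphi,\xi\lambda)$, the divergence cross-term drops out, and the remaining pairing is $\tfrac12\int_M\langle\nabla\theta,v\rangle\varrho=-\tfrac12\int_M\theta\,\Div(\rho v)\,\mu=0$. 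The generalized Hodge decomposition then shows gradients exhaust the complement, exactly as for $\G$.

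The main step, where the real content sits, is the horizontal isometry. Take a horizontal vector $(\nabla\theta\circ\varphi,\xi\lambda)$; by the computation of $d\pi$ as in \eqref{eq:bundle_projection_derivative} its image is $\dot\varrho=(\xi\rho-\Div(\rho\nabla\theta))\mu$. I would then solve the defining elliptic problem $-\Div(\rho\nabla S)=\dot\rho-\kappa\rho$: integrating against $\mu$ forces the zero-mean solvability condition $\kappa=\tfrac1m\int_M\dot\varrho=\xi$, after which $\Div(\rho\nabla(S-\theta))=0$ gives $\nabla S=\nabla\theta$ (the solution being unique up to an additive constant, which the gradient kills). Substituting into $\bGdiv$ and expanding
\[
\Big(\frac{\dot\varrho}{\varrho}\Big)^2=\Big(\xi-\frac{\Div(\rho\nabla\theta)}{\rho}\Big)^2,
\]
the cross term integrates to zero because $\int_M\Div(\rho\nabla\theta)\,\mu=0$, so the square recombines the scalar contribution $\lambda\xi^2$ and the divergence contribution $\int_M\Div(\rho\nabla\theta)^2/\rho^2\,\varrho$ into the same form carried by $\Gdiv$ on the horizontal vector.

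The delicate point is precisely the well-posedness of this weighted Poisson equation and the forced value of the Lagrange multiplier $\kappa=\xi$: this is the analogue of the Hodge step, and it is what guarantees both that the horizontal lift of $\dot\varrho$ is unique and that $d\pi$ is a fibrewise isometry. I expect the remaining work to be bookkeeping of the integration by parts and the normalizing constants in $\Gdiv$ and $\bGdiv$ so that the two quadratic forms coincide term by term; once the elliptic solvability and the identification $\nabla S=\nabla\theta$, $\kappa=\xi$ are in place, the Riemannian submersion property follows immediately.
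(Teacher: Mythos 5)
Your proof is correct and follows essentially the same route as the paper's: identify the horizontal bundle with that of Lemma~\ref{lem:hor_true_WFR} (because the divergence term of $\Gdiv$ vanishes on vertical vectors), compute $d\pi$ of a horizontal vector, deduce $\nabla S=\nabla\theta$ from the weighted Poisson equation, and expand $\bGdiv$ so that the cross term integrates to zero. Your explicit determination of the Lagrange multiplier $\kappa=\xi$ from the solvability condition is a detail the paper leaves implicit, and your reading of the normalization constants as bookkeeping is the right call, since the factors of $\tfrac12$ in \eqref{eq:trueWFR} versus $\bGdiv$ are typographical inconsistencies in the paper itself.
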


\begin{proof}
    The tangent derivative of the projection is 
    \[
        T_{(\varphi,\lambda)}(v\circ\varphi,\xi\lambda) = \xi\varrho -  L_v\varrho  .
    \]
    In particular, for a horizontal vector $(\nabla\theta\circ\varphi, \int_M \theta\varrho)$ we have 
    \[
        T_{(\varphi,\lambda)}(\nabla\theta\circ\varphi,\int_M \theta\varrho) = \frac{\varrho}{\lambda} \int_M \theta\varrho - \operatorname{div}(\rho\nabla\theta)\mu  .
    \]
    Taking this expression as $\dot\varrho$ we see from the definition of $\Gdiv$ that
    \begin{equation*}
        -\operatorname{div}(\rho\nabla S) = -\operatorname{div}(\rho\nabla\theta).
    \end{equation*}
    Thus, $\nabla\theta = \nabla S$.
    We now plug this into the metric $\bGdiv$:
    \begin{align*}
        &\bGdiv[\varrho]\left(\frac{\varrho}{\lambda} \int_M \theta\varrho -  \operatorname{div}(\rho\nabla\theta)\mu,\frac{\varrho}{\lambda} \int_M \theta\varrho -  \operatorname{div}(\rho\nabla\theta)\mu \right)
        =   \\
        & \int_M \lvert \nabla\theta \rvert \varrho + 
        \frac{1}{\lambda}\int_M \theta\varrho \int_M \theta\varrho + \int_M \frac{\operatorname{div}(\rho\nabla\theta)^2}{\rho} \mu = \\
        & \Gdiv[(\varphi,\lambda)]\left( (\nabla\theta\circ\varphi, \int_M \theta\varrho), (\nabla\theta\circ\varphi, \int_M \theta\varrho) \right)
    \end{align*}
    This proves the assertion.
\end{proof}







\begin{remark}\label{rem:lagrange_vs_hamiltonian}
The small conical extension $\DiffR$ with metric $\G$ (see \eqref{eq:new_metric_direct_product}) 
can be viewed as the ``common ground" for the Lagrangian and Hamiltonian extensions in constructions of an unbalanced optimal transport.

Indeed, the Hamiltonian       
$H(\varrho,\theta) = \frac{1}{2}\int_M \left( \lvert \nabla\theta\rvert^2 + \theta^2 \right)\varrho $ (see \eqref{eq:WFR-hamilt}) expressing the metric $\bGlarge$
on $\Vol$ in the dual variables 
is the sum of two terms. The first one corresponds to the Wasserstein metric, while the second 
term $\int_M \theta^2 \varrho = \int_M(\nu/\varrho)^2\varrho$ for the density $\nu:=\theta\varrho$
represents the Fisher-Rao metric. Hence the name of the WFR metric for the semi-direct generalization of UOT developed in \cite{chizat2018unbalanced, gallouet2021regularity, vialard2017diffeomorphisms}.

On the other hand, the metric $ \Gdiv$ on $\Vol$ with an extra divergence term (see \eqref{eq:trueWFR}) also has a WFR form, although not in the Hamiltonian, but in the Lagrangian setting: the first term is the Wasserstein metric, while the second, divergence term is  the degenerate ${\dot H}^1$
contribution giving the Fisher-Rao metric on $\Vol$. 
\end{remark}



\subsection{Conical Fisher--Rao metrics}

Consider the group $\Diff$ equipped with a $\dot H^1$-type metric so that  its projection to the  space $\Dens(M)$ of normalized densities is equipped with the Fisher-Rao metric. 
It also admits the conical extension with the projection $\DiffR$ on $\Vol$. 

Note that since the Fisher-Rao metric on $\Dens(M)$ is spherical, its conical extension to $\Vol\supset\Dens(M)$ is an (infinite-dimensional) positive quadrant of the (pre-Hilbert) space of highest-degree forms naturally equipped with the {\it flat} $L^2$-metric. The positive quadrant is formed by all volume forms
on the manifold.
The projection $\Diff\to\Dens(M)$ from diffeomorphisms to volume forms is known to be a Riemannian submersion
\cite{khesin2013geometry}, and it remains a Riemannian submersion for its conical extension $\DiffR\to\Vol$.

\begin{proposition}
    The space $\DiffR$ equipped with a conical $\dot H^1$-type metric has non-positive sectional curvatures.
\end{proposition}

\begin{proof}
    Indeed, under a Riemannian submersion the sectional curvature cannot decrease \cite{oneilsemi}. 
    Since the base manifold of all volume forms is flat (i.e. its sectional curvatures all vanish) under the projection, the sectional curvatures of the space $\DiffR$ must be negative or equal to zero.
\end{proof}

This conical extension also admits a finite-dimensional version, extending the one in \cite{modin2016geometry}.
Indeed, the finite-dimensional submanifold $\GL(n)\times \mathbb R_+\subset \DiffRn\times \mathbb R_+$ 
is totally  geodesic and according to Lemma \ref{lem:total-geodesic} the corresponding projection to $\Vol$
is totally geodesic as well. One can expect similar matrix decompositions coming from this Riemannian submersion, extending those in \cite{modin2016geometry}.


\appendix
\section{General form of the geodesic equations for a conical extension}\label{appendix}

\begin{theorem}\label{thm:geodesics_of_cone}
       The geodesic equations for the cone $Q\times\mathbb R_+$ formed from a Riemannian manifold $(Q,g)$ with the metric $r^2g+dr^2 $ are:
            \[\begin{cases}
                    \displaystyle  \nabla_{\dot q}\dot q + \frac{2}{\alpha}\dot\alpha\dot q=0,\\[0.5em]
                    \displaystyle  \ddot\alpha - g(\dot q,\dot q) \alpha=0,
            \end{cases}\]
        for a geodesic $\gamma = (q, \alpha) \in Q\times\mathbb R_+$.
    \end{theorem}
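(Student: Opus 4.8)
The plan is to obtain both equations at once from the first variation of the energy (action) functional, rather than by assembling the Christoffel symbols of the cone metric directly. Writing the cone metric as $\tilde g = \alpha^2 g + d\alpha^2$, a curve $\gamma(t) = (q(t),\alpha(t))$ is a geodesic exactly when it is a critical point of
\[
    E[\gamma] = \frac{1}{2}\int \tilde g(\dot\gamma,\dot\gamma)\,dt = \frac{1}{2}\int \big( \alpha^2\, g(\dot q,\dot q) + \dot\alpha^2 \big)\,dt .
\]
Since the radial factor $\mathbb{R}_+$ and the base $Q$ enter independently, I would vary $\alpha$ and $q$ separately and read off the two Euler--Lagrange equations from the Lagrangian $L = \tfrac12\big(\alpha^2 g(\dot q,\dot q) + \dot\alpha^2\big)$.

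The radial variation is immediate: treating $g(\dot q,\dot q)$ as a coefficient, the equation $\tfrac{d}{dt}\partial_{\dot\alpha}L = \partial_\alpha L$ gives $\ddot\alpha = \alpha\, g(\dot q,\dot q)$, which is the second equation. For the base variation I would pass to local coordinates on $Q$, write $g(\dot q,\dot q) = g_{ij}(q)\dot q^i\dot q^j$, and compute $\tfrac{d}{dt}\partial_{\dot q^k}L - \partial_{q^k}L = 0$. Differentiating $\partial_{\dot q^k}L = \alpha^2 g_{kj}\dot q^j$ produces the purely metric terms $\alpha^2\big(g_{kj}\ddot q^j + \partial_l g_{kj}\,\dot q^l\dot q^j - \tfrac12\partial_k g_{ij}\,\dot q^i\dot q^j\big)$ together with a cross term $2\alpha\dot\alpha\, g_{kj}\dot q^j$ coming from the derivative of the warping factor $\alpha^2$.

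The one step that requires care is recognizing that the purely metric terms reassemble into a covariant derivative. After dividing by $\alpha^2$, the standard index-lowering identity for the Levi--Civita connection of $(Q,g)$ gives $g_{kj}\ddot q^j + \partial_l g_{kj}\,\dot q^l\dot q^j - \tfrac12\partial_k g_{ij}\,\dot q^i\dot q^j = g_{kj}\,(\nabla_{\dot q}\dot q)^j$, expressed through the Christoffel symbols $\Gamma^j_{il}$. The surviving cross term contributes $\tfrac{2\dot\alpha}{\alpha}g_{kj}\dot q^j$, so the equation reads $g_{kj}\big((\nabla_{\dot q}\dot q)^j + \tfrac{2\dot\alpha}{\alpha}\dot q^j\big) = 0$, and nondegeneracy of $g$ yields the first equation. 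An equivalent derivation, matching the reference \cite{oneilsemi}, identifies the cone as the warped product $\mathbb{R}_+\times_f Q$ with base $\mathbb{R}_+$, fiber $Q$, and warping function $f(\alpha)=\alpha$, and then applies O'Neill's connection formulas: the normal part $\mathrm{nor}(\nabla_V V) = -\tfrac{\langle V,V\rangle}{f}\,\mathrm{grad}\,f$ reproduces $\ddot\alpha - \alpha\,g(\dot q,\dot q) = 0$, while the mixed terms $\nabla_X V = \nabla_V X = (Xf/f)\,V$, with $X=\dot\alpha\,\partial_\alpha$ and $Xf/f = \dot\alpha/\alpha$, sum to the coefficient $2\dot\alpha/\alpha$ in the fiber equation.
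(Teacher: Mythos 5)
Your proof is correct, and its primary route is genuinely different from the paper's. The paper's own argument is short but not self-contained: it identifies the cone with the warped product $\mathbb{R}_+\times_f Q$, $f(r)=r$, quotes the warped-product geodesic equations from O'Neill, and specializes $\nabla f = 1$, $f\circ\alpha=\alpha$. You instead obtain both equations from the first variation of the energy $E[\gamma]=\tfrac12\int\bigl(\alpha^2 g(\dot q,\dot q)+\dot\alpha^2\bigr)\,dt$: the radial Euler--Lagrange equation gives $\ddot\alpha=\alpha\,g(\dot q,\dot q)$ at once, and for the base variation you correctly reassemble the coordinate expression $g_{kj}\ddot q^j+\partial_l g_{kj}\,\dot q^l\dot q^j-\tfrac12\partial_k g_{ij}\,\dot q^i\dot q^j$ into $g_{kj}(\nabla_{\dot q}\dot q)^j$ and divide out the warping factor $\alpha^2$ --- precisely the step that needs care, and you handle it correctly. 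What your approach buys is a self-contained, elementary proof that also generalizes transparently to the metrics $r^{2p}g+dr^2$ mentioned in the paper's closing remark; what the paper's approach buys is brevity, by outsourcing the curvature bookkeeping to the warped-product formalism. Your final paragraph, rederiving the result from O'Neill's connection formulas $\mathrm{nor}(\nabla_V V)=-\frac{\langle V,V\rangle}{f}\,\mathrm{grad}\,f$ and $\nabla_X V=\nabla_V X=(Xf/f)\,V$, is essentially a more detailed version of the paper's own proof, so the two treatments meet there.
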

    \begin{proof}
For the cone $Q\times\mathbb R_+$ consider the two projections:
            \[
            \begin{tikzcd}
                Q\times\mathbb R_+ \arrow[r,"\pi"] \arrow[d,swap,"\sigma"] & \mathbb R_+\\
                Q
            \end{tikzcd}
            \]
This cone can be viewed as a warped product $\mathbb R_+ \times_f Q$ for  $f\colon\mathbb R_+ \to\mathbb R$ defined by $r\mapsto r$ and the metric defined for $v\in T_{(r,q)}\mathbb R_+\times Q$ by:
            \[\langle v,v\rangle_{(r,q)} = d\pi(v)^2 + f(r)^2\,g_q(d\sigma(v),d\sigma(v)).\]
        The geodesic equations for such a warped product is given for a geodesic $\gamma = (\alpha,q)\in\mathbb R_+\times_f Q$ by
            \[\begin{cases}
                    \displaystyle  \nabla_{\dot\alpha}\dot\alpha - g(\dot q,\dot q)(f\circ\alpha)\nabla f=0,\\[0.5em]
                    \displaystyle  \nabla_{\dot q}\dot q + \frac{2}{f\circ\alpha}\frac{d(f\circ\alpha)}{dt}\dot q=0\,,
            \end{cases}\]
            see  \cite{oneilsemi}. 
        With our setup, $\nabla f = 1$ (with the standard metric on $\mathbb R_+$), and $f\circ r = r$, and the result follows.
    \end{proof}

\begin{remark}
In the present paper we apply the corresponding conical one-dimensional extension $r^2 g(v,v)+dr^2$
to the group of diffeomorphisms and the space of normalized densities, where $g(v,v)$  is, respectively, the $L^2$-metric on ${\rm Diff}(M)$ and the Wasserstein metric on ${\rm Dens}(M)$.  

    For an arbitrary $p\in \mathbb R$, the geodesic equations for a geodesic $\gamma = (q, \alpha)$ on the cone $Q\times\mathbb R_+$ with the metric $r^{2p}g(v,v)+dr^2 $ assume the form:
        \[\begin{cases}
           \displaystyle  \nabla_{\dot q}\dot q + \frac{2p}{\alpha}\dot\alpha\dot q=0,\\[0.5em]
           \displaystyle  \ddot \alpha -  pr^{p-1}g(\dot q,\dot q) \alpha^p=0,
        \end{cases}\]
    of which Theorem~\ref{thm:geodesics_of_cone} is the special case of $p=1$, while $p=0$ corresponds to
    the direct product metric on the  cylinder $Q\times \mathbb R$.
One can also consider the one-parameter extensions $r^{2p} g+dr^2$ in infinite dimensions as well. Other hyperbolic and parabolic-type metrics for negative and positive values of $p$ might be useful in  problems of optimal transport whenever it is convenient to tune the mass balance. 
\end{remark}


 

\bibliographystyle{amsplainnat} 
\bibliography{references}

\end{document}